\documentclass[12pt]{article}
\textheight21.2cm
\textwidth14.7cm
\hoffset-1mm
\voffset-7mm
\usepackage{amsmath, amsfonts, amssymb, amsthm}
\usepackage[colorlinks = true, linkcolor = blue]{hyperref}
\parskip4pt
\newtheorem{theorem}{Theorem}[section]
\newtheorem{proposition}[theorem]{Proposition}

\newtheorem{remark}[theorem]{Remark}

\newtheorem{corollary}[theorem]{Corollary}
\newtheorem{lemma}[theorem]{Lemma}

\parskip1pt
\def\R{\mathbb R}

\def\R{\mathbb{R}}

\def \exp {{\rm exp}}

\def \R {\mathbb{R}}

\begin{document}
	
	\title{Dense images of the power maps for a disconnected real algebraic group}
	
	\author{Arunava Mandal}

	\maketitle
	\begin{abstract}  
		Let $G$ be a complex algebraic group defined over $\mathbb R$, which is not necessarily Zariski connected.
		In this article, we study the density of the images of the power maps $g\to g^k$, $k\in\mathbb N$, on real points 
		of $G$, i.e., $G(\mathbb R)$ equipped with the real topology. 
		As a result, we extend a theorem of P. Chatterjee 
		on surjectivity of the power map for the set of semisimple elements of $G(\mathbb R)$. 
		We also characterize surjectivity of the power map for a disconnected group $G(\mathbb R)$.
		The results are applied in particular to describe the image of the exponential map of $G(\mathbb R).$  
		
	\end{abstract} 
	\noindent {\it Keywords}: {Power maps, real algebraic groups, weak exponentiality.}

\section{Introduction}

Let $G$ be a topological group. For $k\in\mathbb N$, let $P_k:G\to G$ be the $k$-th power map of $G$ defined by
$P_k(g)=g^k$ for all $g\in G$. 
This article is mainly concerned with the question as to when such a map has a {\it dense} image or a {\it surjective} image
for real points of an algebraic group.

There is a vast amount of literature with regard to analogous questions in
the case of exponential maps of (connected) Lie groups (see \cite{DH}, \cite{HM}, \cite{HL} and \cite{DT} for example).
Moreover, it is known,
proved independently by K. Hofmann, J. Lawson  \cite{HL} and  M. McCrudden \cite{Mc} that 
the exponential map of a {\it connected} Lie group $G$ is surjective
if and only if $P_k(G)=G$ for all $k\in\mathbb N$.
Recently, it was proved
that the exponential map has a dense image in $G$ if and only if the image of the $k$-th power map is dense
for all $k$ (see \cite{BM}). This gives a motivation to study the images of the power maps.

A well-known result of A. Borel states that the image of a
word map (and hence in particular
a power map) on a semisimple {\it algebraic group} $G$ is Zariski-dense (see \cite{B1}).
Note that the image of the power map is not always dense
for real points of an algebraic group in {\it real topology}. For example, in case of ${\rm SL}(2,\mathbb R)$, 
the image of $P_2$
is not dense in real topology but its image is Zariski-dense in ${\rm SL}(2,\mathbb R)$.
Therefore our aim is to study the density property of $P_k$ for real points of an algebraic group in {\it real topology}.
The density of $P_k$
is well understood for {\it connected} Lie groups (see \cite{BM}, \cite{M}). 
There has also been a considerable amount of work 
with regard to the surjectivity
of the map for both connected and {\it disconnected} groups 
(cf. e.g. \cite{Ch}, \cite{Ch1}, \cite{Ch2}, \cite{St}, \cite{DM} and also see references therein).
However, the density of the power map 
is not known for a disconnected (real) algebraic group.

In this context, we obtain results for density of the image of $P_k$ on  
real points $G(\mathbb R)$ of a complex algebraic group $G$, which is defined over $\mathbb R$
and is {\it not necessarily Zariski connected} 
(see Theorem \ref{T1}). Indeed, Theorem \ref{T1} provides an extension of \cite[Theorem 5.5]{Ch1}, 
which deals with the surjectivity of the power map on the set of semisimple elements of $G(\mathbb R)$.
As a consequence of Theorem \ref{T1}, we get a characterization of the {\it surjectivity} of the power map for
(possibly disconnected) group
$G(\mathbb R)$ (see Corollary \ref{C3}).

Let $G$ be a complex algebraic group defined over $\mathbb R$. Let 
$G(\mathbb R)$ be the set of $\mathbb R$-points of $G$. Note that $G(\mathbb R)$ is equipped with the
real topology and has a real manifold structure. We denote the Zariski connected component of identity of $G$ by $G^0$. Let
$G(\mathbb R)^*$ be the connected component of identity of $G(\mathbb R)$ in Hausdorff or real topology.
We denote $(a,b)=1$ if two integers $a$ and $b$ are co-prime. Also, the order of a finite group $F$ is denoted by $\circ(F)$.
For a subset $A$ of an algebraic group $G$, we denote the set of semisimple elements of $A$ by $S(A)$.

The following is the main result of this article. It characterizes the density of the images of the power maps for a disconnected algebraic group which is not necessarily reductive, and provides an extension of \cite[Theorem 5.5]{Ch1}.
In the context of surjectivity of the power map, an analogous result 
(corresponding to $(1)\Leftrightarrow (2)$ in Theorem \ref{T1}) was proved by P. Chatterjee 
(see \cite[Theorem 1.8]{Ch1}). Also, he showed $(3)\Leftrightarrow (4)$ in Theorem \ref{T1}
(see \cite[Theorem 5.5]{Ch1}).

\begin{theorem}\label{T1}
	Let $G$ be a complex algebraic group, not necessarily Zariski-connected, defined over 
	$\mathbb R$.
	Let $A$ be a subgroup of $G$ with  $G(\mathbb R)^*\subset A\subset G(\mathbb R)$ and
	$k\in\mathbb N$. Then the following are equivalent.
	\begin{enumerate}
	\item $P_k(A)$ is dense in $A$.
	
	\item $(k,\circ(A/G(\mathbb R)^*))=1$
	and $P_k(G(\mathbb R)^*)$ is dense.
	
	\item $(k,\circ(A/G(\mathbb R)^*))=1$
	and $P_k:S(G(\mathbb R)^*)\to S(G(\mathbb R)^*)$ is surjective.
	
	\item $P_k:S(A)\to S(A) $ is surjective.
	\end{enumerate}
	In particular, if $G$ is a Zariski connected algebraic group defined over $\mathbb R$ 
	then for an odd $k\in\mathbb N$, $P_k(G(\mathbb R))$ is dense in $G(\mathbb R)$ 
	if and only if $P_k(G(\mathbb R)^*)$ is dense in $G(\mathbb R)^*$.
	\end{theorem}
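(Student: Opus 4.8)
The plan is to establish the new content of the theorem, the equivalence $(1)\Leftrightarrow(2)$, and to obtain the rest from known results. Write $N:=G(\mathbb R)^*$, a closed connected normal subgroup of $A$; since the real points of a real algebraic group have finitely many connected components in the real topology, $F:=A/N$ is a finite group with $\circ(F)=\circ(A/G(\mathbb R)^*)$. Here $(3)\Leftrightarrow(4)$ is \cite[Theorem 5.5]{Ch1}; and $(2)\Leftrightarrow(3)$, the coprimality clause being common to both, reduces to the statement that for the connected group $N$ one has $P_k(N)$ dense if and only if $P_k\colon S(N)\to S(N)$ is surjective, which is part of the description of density of power maps on connected groups in \cite{BM} and \cite{M}. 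So it remains to prove $(1)\Leftrightarrow(2)$.

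For $(1)\Rightarrow(2)$ I would first push $P_k(A)$ forward along the open continuous quotient map $A\to F$: its image $P_k(F)$ is then dense in the discrete group $F$, so $P_k\colon F\to F$ is onto, and a short argument with a Sylow $p$-subgroup and an element of maximal $p$-power order shows $(k,\circ(F))=1$. Using this coprimality, any $a\in A$ with $a^k\in N$ has image in $F$ of order dividing $(k,\circ(F))=1$, hence $a\in N$; therefore $P_k(A)\cap N=P_k(N)$. As $N$ is open in $A$, density of $P_k(A)$ in $A$ restricts to density of $P_k(A)\cap N=P_k(N)$ in $N$, which together with $(k,\circ(F))=1$ is exactly $(2)$.

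For $(2)\Rightarrow(1)$ I would show that each connected component $C$ of $A$ lies in the closure of $P_k(A)$. Fix $C=aN$. Since $(k,\circ(F))=1$, $P_k$ is a bijection of $F$, so there is a unique coset $bN$ with $b^kN=C$, and then $\{g\in A:g^k\in C\}=bN$. Writing $g=nb$ with $n\in N$ and $\phi:=\mathrm{Int}(b)|_N\in\Aut(N)$, the identity $(nb)^k=n\,\phi(n)\,\phi^2(n)\cdots\phi^{k-1}(n)\,b^k$ gives $P_k(A)\cap C=T(N)\,b^k$, where $T\colon N\to N$, $T(n)=n\,\phi(n)\cdots\phi^{k-1}(n)$, is the twisted $k$-th power map attached to $\phi$ (note $\phi^{d}=\mathrm{Int}(b^{d})$ is inner, $d$ being the order of $bN$ in $F$). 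Since right translation by $b^k$ carries $N$ homeomorphically onto $C$, the implication reduces to showing $\overline{T(N)}=N$, a single-component statement with cyclic component group. I would treat this through the Jordan decomposition of elements of the coset $bN$: each $g\in bN$ is uniquely $g_sg_u$ with $g_u$ unipotent (hence in $N$) and $g_s\in S(A)\cap bN$, and $P_k$ is surjective on $S(A)$ (condition $(4)$, which follows from $(2)$ via $(2)\Leftrightarrow(3)\Leftrightarrow(4)$), while $P_k$ restricted to any unipotent real algebraic group is a bijection in characteristic zero; density of $T(N)$ in $N$ should then follow by choosing a $k$-th root of $g_s$ and perturbing it, together with $g_u$, within the centralizer $Z_A(g_s)$, which is legitimate because only approximation of $g$ is needed.

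The main obstacle is precisely this last step: a chosen $k$-th root $t$ of the semisimple part $g_s$ commutes with $g_s=t^k$ but in general not with the unipotent part $g_u$, and one must still show that $t$ times a small element can be raised to the $k$-th power so as to approximate $g_sg_u$; the gap between surjectivity of $P_k$ on $S(A)$ (which holds) and on $A$ (which need not) is bridged here by using density rather than surjectivity. Finally, for the last assertion: when $G$ is Zariski connected, $\pi_0(G(\mathbb R))=G(\mathbb R)/G(\mathbb R)^*$ has order a power of $2$ — classical for the real points of a connected real algebraic group, and via a Levi decomposition (the unipotent radical is connected, hence contributes nothing to $\pi_0$) it comes down to a maximal torus, where only the split $\mathbb G_m$ factors contribute, each a copy of $\mathbb Z/2$. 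Hence for odd $k$ the condition $(k,\circ(G(\mathbb R)/G(\mathbb R)^*))=1$ holds automatically, so $(1)\Leftrightarrow(2)$ specializes to: $P_k(G(\mathbb R))$ is dense in $G(\mathbb R)$ if and only if $P_k(G(\mathbb R)^*)$ is dense in $G(\mathbb R)^*$.
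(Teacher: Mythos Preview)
Your handling of $(1)\Rightarrow(2)$, $(2)\Leftrightarrow(3)$, and $(3)\Leftrightarrow(4)$ is correct and matches the paper's approach; in fact your argument for $(1)\Rightarrow(2)$ is more explicit than the paper's (which simply calls it obvious), and the Sylow argument you sketch is valid: if $p\mid k$ and $p\mid\circ(F)$, an element of maximal $p$-power order in $F$ cannot be a $k$-th power.

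The genuine gap is in $(2)\Rightarrow(1)$. You reduce correctly to showing that each coset $bN$ meets $\overline{P_k(A)}$ in a dense set, and you observe that $P_k$ is surjective on $S(A)$ by the already-established $(2)\Leftrightarrow(4)$. But then the argument stops: given $g=g_sg_u\in bN$ and a semisimple $t$ with $t^k=g_s$, you note yourself that $t$ need not commute with $g_u$, and the proposed fix---``perturbing $t$ together with $g_u$ within $Z_A(g_s)$''---is not a proof. No mechanism is given for producing an element whose $k$-th power is close to $g$, and in the presence of a nontrivial unipotent radical $R_u(G)$ there is no reason the semisimple locus is dense in the coset, so one cannot simply restrict to semisimple $g$. (When $G^0$ is reductive your idea does work, since then $S(A)$ is dense in $A$ and $(4)$ finishes immediately; the whole difficulty is the non-reductive case.)

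The paper's route for $(2)\Rightarrow(1)$ is structurally different and supplies exactly the missing ingredient. It passes to a Levi decomposition $G(\mathbb R)=L(\mathbb R)\ltimes R_u(G)(\mathbb R)$, and for each semisimple $x_0\in A\cap L(\mathbb R)$ constructs (via the proof of \cite[Theorem~5.5]{Ch1}) a closed \emph{abelian} subgroup $B_{x_0}\subset A\cap L(\mathbb R)$ with finitely many components, containing $x_0$, on which $P_k$ is surjective. One then forms the solvable group $G_{x_0}=B_{x_0}\ltimes R_u(G)(\mathbb R)$ and applies a result of Dani--Mandal \cite{DM} on power maps of extensions of simply connected nilpotent $N$ by abelian groups: analysing the $B_{x_0}$-action on the successive quotients $V_j$ of a central series of $N$, one finds a dense open $W_{x_0}\subset G_{x_0}$ consisting of $k$-th powers. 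The union $\bigcup_{x_0}W_{x_0}$ is then dense in $S(A\cap L(\mathbb R))\cdot R_u(G)(\mathbb R)$, hence in $A$. The point is that the commutation problem you flagged is handled not by perturbation but by this structural theorem, which controls when $k$-th roots exist in a coset $xN$ in terms of how $x$ and its $k$-th root act on the $V_j$.
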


The connection of the power maps with weak exponentiality is discussed in \S 4 and \S 5. The next result (see \S 5) characterizes the surjectivity of the power map for a disconnected group $G(\mathbb R)$. 

\begin{corollary}\label{C3}
	Let $G$ be an algebraic group defined over $\mathbb R$, which is not necessarily Zariski connected.
	Let $k\in\mathbb N$. 
	Then $P_k:G(\mathbb R)\to\ G(\mathbb R)$ is surjective if and only if
	$P_k(Z_{G(\mathbb R)}(u))$ is dense in $Z_{G(\mathbb R)}(u)$ for any unipotent element $u\in G(\mathbb R)$.
\end{corollary}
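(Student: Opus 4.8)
The plan is to reduce both implications to Theorem \ref{T1}, applied not to $G$ but to the unipotent centralizers $Z_G(u)$. First I would record that $Z_G(u)$ is a Zariski-closed subgroup of $G$ defined over $\mathbb R$ (it is the fixed-point set of conjugation by the $\mathbb R$-point $u$) and that $Z_{G(\mathbb R)}(u)=Z_G(u)(\mathbb R)$; taking $A=Z_G(u)(\mathbb R)$, the hypotheses of Theorem \ref{T1} hold trivially, so that theorem yields that $P_k(Z_{G(\mathbb R)}(u))$ is dense in $Z_{G(\mathbb R)}(u)$ if and only if $P_k\colon S(Z_{G(\mathbb R)}(u))\to S(Z_{G(\mathbb R)}(u))$ is surjective (the equivalence $(1)\Leftrightarrow(4)$ there). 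Hence it is enough to show that $P_k\colon G(\mathbb R)\to G(\mathbb R)$ is surjective if and only if, for every unipotent $u\in G(\mathbb R)$, every semisimple element of $Z_{G(\mathbb R)}(u)$ has a semisimple $k$-th root lying in $Z_{G(\mathbb R)}(u)$. Two standard facts will be used repeatedly: every $g\in G(\mathbb R)$ has a Jordan decomposition $g=g_sg_u=g_ug_s$ with both components in $G(\mathbb R)$ (since $G$ is defined over $\mathbb R$), and in characteristic zero each unipotent $u$ has a \emph{unique} unipotent $k$-th root $u^{1/k}=\exp(\tfrac1k\log u)$, which is an $\mathbb R$-point when $u$ is and which satisfies $Z_G(u^{1/k})=Z_G(u)$.

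For the ``if'' direction I would take $g\in G(\mathbb R)$, write $g=g_sg_u$, note that $g_s\in S(Z_{G(\mathbb R)}(g_u))$, and use the hypothesis to get a semisimple $s'\in Z_{G(\mathbb R)}(g_u)$ with $(s')^k=g_s$. Then $u':=g_u^{1/k}\in G(\mathbb R)$ commutes with $s'$ (anything commuting with $g_u$ commutes with $\exp(\tfrac1k\log g_u)$), and $h:=s'u'$ satisfies $h^k=g_s g_u=g$; so $P_k$ is onto.

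For the ``only if'' direction I would fix a unipotent $u$ and $s\in S(Z_{G(\mathbb R)}(u))$, consider the element $g:=su$ (of which this is the Jordan decomposition), and take $h\in G(\mathbb R)$ with $h^k=su$. Decomposing $h=h_sh_u$, uniqueness of Jordan decomposition gives $h_s^k=s$ and $h_u^k=u$; uniqueness of unipotent $k$-th roots gives $h_u=u^{1/k}$ and hence $Z_G(h_u)=Z_G(u)$. Since $h_s$ commutes with $h_u$ it lies in $Z_{G(\mathbb R)}(u)$, where it is a semisimple $k$-th root of $s$; Theorem \ref{T1} then upgrades this to density of $P_k(Z_{G(\mathbb R)}(u))$ in $Z_{G(\mathbb R)}(u)$.

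The step I expect to need the most care is this last transfer: extracting from a $k$-th root of the \emph{mixed} element $su$ a $k$-th root of $s$ that genuinely lies inside $Z_{G(\mathbb R)}(u)$. This is exactly where characteristic zero is essential — it forces $h_u$ to be \emph{the} unipotent root $u^{1/k}$, so that $Z_G(h_u)=Z_G(u)$; in general $h_s$ would only be known to centralize $h_u$. The remaining points (that $Z_G(u)$ is defined over $\mathbb R$, that $Z_{G(\mathbb R)}(u)=Z_G(u)(\mathbb R)$, and that Theorem \ref{T1} applies verbatim to the pair $(Z_G(u),\,Z_G(u)(\mathbb R))$) are routine verifications.
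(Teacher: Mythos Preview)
Your proof is correct and follows essentially the same route as the paper: both reduce, via the equivalence $(1)\Leftrightarrow(4)$ of Theorem~\ref{T1} applied to $H_u=Z_G(u)$ with $A=H_u(\mathbb R)$, to the statement that $P_k$ is surjective on $G(\mathbb R)$ if and only if $P_k$ is surjective on $S(Z_{G(\mathbb R)}(u))$ for every unipotent $u$. The only difference is that the paper quotes this last equivalence as \cite[Lemma~5.6]{Ch1}, whereas you reprove it directly with the Jordan-decomposition\,/\,unique-unipotent-root argument; your version is thus more self-contained but otherwise identical in strategy.
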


An application of Corollary \ref{C3} is given in Corollary \ref{C14}, which
characterizes the exponentiality of $G(\mathbb R)$. Corollary \ref{C3} can also be thought as an extension of \cite[Theorem 1.7]{Ch1}.

\section{Preliminaries}

Let $G$ be a complex algebraic group.
Let $G^0$ denote
the Zariski-connected component of the identity of $G$.
The maximal, Zariski connected, Zariski closed, normal unipotent subgroup of
$G$ is called the unipotent radical of $G$ and is denoted by $R_u(G)$.
Note that $R_u(G)=R_u(G^0).$

We now include the following results which will be used subsequently:

\begin{theorem}{\rm (\cite[Theorem 8.9(c)]{HP})}\label{A2}
	Let $G$ be a complex algebraic group, which is not necessarily connected.
	Suppose the connected component $G^\circ$ is reductive. Then for each 
	$a \in G$ the set of semisimple elements  $S(G) \cap G^\circ a$ 
	is a non-empty Zariski open subset of the coset $G^\circ a$. In particular 
	$S(G)$ is Zariski dense in $G$.
\end{theorem}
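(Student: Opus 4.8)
The plan is to reduce the whole statement to a single coset and then to exhibit, inside that coset, a nonempty Zariski-open set of semisimple elements. The concluding assertion is then automatic: since $[G:G^\circ]<\infty$, the group $G$ is a finite union of cosets $G^\circ a$, each of which is irreducible, being a translate of the irreducible variety $G^\circ$. Thus if $S(G)\cap G^\circ a$ contains a nonempty Zariski-open subset of $G^\circ a$ for every $a$, it is dense in each $G^\circ a$, and hence $S(G)=\bigcup_a\big(S(G)\cap G^\circ a\big)$ is Zariski dense in $G$. So I fix a coset $C=G^\circ a$; it is irreducible, and it suffices to produce a nonempty Zariski-open subset of $C$ contained in $S(G)$.

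Nonemptiness of $S(G)\cap C$ is general and uses only the Jordan decomposition. Given $g\in C$, write $g=g_sg_u=g_ug_s$ with $g_u$ unipotent. Unipotent elements always lie in the identity component, so $g_u\in G^\circ$, and since $G^\circ$ is normal we get $g_s=g\,g_u^{-1}\in G^\circ a\cdot G^\circ=G^\circ a=C$. Hence the semisimple part of any element of $C$ lies again in $C$, so $S(G)\cap C\neq\emptyset$. This step does not use reductivity; reductivity is exactly what is needed to pass from nonemptiness to density, as the latter can fail otherwise (for $G^\circ=\mathbb{G}_a$ the identity is the only semisimple element).

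For density I would first pin down a $\sigma$-stable maximal torus of the component. The essential input, and the precise point at which reductivity of $G^\circ$ is used, is Steinberg's fixed-point theorem for disconnected reductive groups: conjugation $\mathrm{Int}(a)$ stabilizes some Borel subgroup of $G^\circ$ together with a maximal torus $T$ inside it. After replacing $a$ by a suitable $G^\circ$-translate I obtain a quasi-semisimple $\sigma\in C$ normalizing $T$, and passing to its semisimple part (still in $C$ by the previous paragraph, and still normalizing a maximal torus) I may assume $\sigma$ is semisimple. Then $H:=\langle T,\sigma\rangle$ has identity component $H^\circ=T$, a torus, so $H$ contains no nontrivial unipotent element; by Jordan decomposition every element of $H$, in particular every element of the coset $T\sigma\subseteq C$, is semisimple. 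This yields a whole subvariety $T\sigma\subseteq S(G)\cap C$.

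Finally I would spread $T\sigma$ out by conjugation. Consider the morphism $G^\circ\times T\sigma\to C$, $(x,y)\mapsto xyx^{-1}$, whose image lies in $S(G)\cap C$. A generic $y\in T\sigma$ is regular, with $Z_{G^\circ}(y)^\circ$ a maximal torus, so the generic fibre has dimension $\mathrm{rank}\,G^\circ=\dim T\sigma$ and the image has dimension $\dim G^\circ=\dim C$; being the image of an irreducible variety it is dense in $C$. Moreover the regular semisimple elements so obtained form a Zariski-open subset of $C$, by upper semicontinuity of $g\mapsto\dim Z_{G^\circ}(g)$, and this is the required nonempty Zariski-open subset of $C$ contained in $S(G)$. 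The hard part will be producing the quasi-semisimple $\sigma$ in the non-identity components and extracting the open set from the regular semisimple locus: one must argue with the regular semisimple elements rather than directly with all of $S(G)\cap C$, and locating these inside an arbitrary component is exactly what forces the structure theory of disconnected reductive groups (Steinberg's theorem) and hence the reductivity hypothesis on $G^\circ$.
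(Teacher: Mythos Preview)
The paper does not give its own proof of this statement: Theorem~\ref{A2} is quoted verbatim from \cite[Theorem 8.9(c)]{HP} and used as a black box in the proof of Proposition~\ref{B}. There is therefore no ``paper's proof'' to compare your argument against; your write-up is an independent justification of a cited external result.

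As a self-contained argument your outline follows the standard route (Jordan decomposition for nonemptiness, Steinberg's theorem to produce a semisimple $\sigma$ normalizing a maximal torus $T$, then sweeping $T\sigma$ by $G^\circ$-conjugation), and the strategy is sound. One point in Step~5 is not quite right as stated: for $y$ in a non-identity coset the centralizer $Z_{G^\circ}(y)^\circ$ of a ``regular'' $y\in T\sigma$ is generally \emph{not} a maximal torus of $G^\circ$ but rather (the identity component of) the $\sigma$-fixed subtorus $T^{\sigma}$, which can have strictly smaller dimension. Your fibre-dimension claim $\dim T$ is nevertheless correct, but for a different reason: the fibre of $(x,y)\mapsto xyx^{-1}$ through $(e,y_0)$ is the transporter $\{z\in G^\circ: z^{-1}y_0z\in T\sigma\}$, whose identity component is $T$ itself for generic $y_0$, not $Z_{G^\circ}(y_0)$. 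With that correction the dominance of the map follows, and then Chevalley's constructibility theorem already gives a nonempty Zariski-open subset of $C$ inside the image, hence inside $S(G)\cap C$; the separate appeal to upper semicontinuity of $g\mapsto\dim Z_{G^\circ}(g)$ is not needed to conclude (and on its own does not show that minimal-centralizer elements are semisimple).
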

\begin{proposition}\label{B}
	Let $G$ be a complex algebraic group, not necessarily Zariski connected, defined
	over $\mathbb R$ such that $G^0$ is reductive. Then $S(G(\mathbb R))$ contains an open dense subset of $G(\mathbb R)$ in the
	Hausdorff topology of $G (\mathbb R)$.
\end{proposition}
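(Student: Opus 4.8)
The plan is to deduce the statement from Theorem~\ref{A2} by a comparison of complex and real dimensions. Since an algebraic group has only finitely many Zariski-connected components and each of them is Zariski-open in $G$, Theorem~\ref{A2} shows that $S(G)$ is a Zariski-\emph{open} dense subset of $G$; write $F := G \setminus S(G)$ for its Zariski-closed complement. First I would check that $F$ is defined over $\mathbb{R}$. Being semisimple is invariant under the complex conjugation $\sigma$ attached to the real structure of $G$: if $\rho$ is a faithful representation of $G$ defined over $\mathbb{R}$, then $\rho(\sigma(g)) = \overline{\rho(g)}$, and a matrix is diagonalizable precisely when its entrywise complex conjugate is. Hence $F$ is $\sigma$-stable, and a $\sigma$-stable reduced Zariski-closed subscheme is defined over $\mathbb{R}$ because $\mathbb{R}$ is a perfect field. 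Consequently $F(\mathbb{R})$ is closed in $G(\mathbb{R})$ for the Hausdorff topology, so $S(G(\mathbb{R})) = G(\mathbb{R}) \setminus F(\mathbb{R})$ is Hausdorff-open in $G(\mathbb{R})$, and it only remains to prove density.

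For density I would compare dimensions. Each Zariski-connected component $G^0 a$ of $G$ is a smooth irreducible complex variety of complex dimension $\dim G^0$, and by Theorem~\ref{A2} the set $S(G) \cap G^0 a$ is a nonempty Zariski-open subset of it; hence $F \cap G^0 a$ is a proper Zariski-closed subset, and so has complex dimension $< \dim G^0$. Taking the (finite) union over the components, $F$ has complex dimension $< \dim G^0$. On the other hand $G$ is smooth over $\mathbb{R}$ and $G(\mathbb{R})$ is nonempty (it contains $e$), so $G(\mathbb{R})$ is a real manifold of dimension $\dim_{\mathbb{C}} G = \dim_{\mathbb{C}} G^0$. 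Since $F$ is an $\mathbb{R}$-variety, its real locus $F(\mathbb{R})$ is a closed semialgebraic subset of the manifold $G(\mathbb{R})$ whose dimension is at most the complex dimension of $F$, hence strictly less than $\dim G(\mathbb{R})$. A closed subset of a manifold of strictly smaller dimension is nowhere dense, so $S(G(\mathbb{R})) = G(\mathbb{R}) \setminus F(\mathbb{R})$ is dense in $G(\mathbb{R})$. Combined with the first step, $S(G(\mathbb{R}))$ is in fact open and dense in $G(\mathbb{R})$, which is stronger than the assertion.

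Almost all of this is formal bookkeeping around Theorem~\ref{A2}; the two inputs that are not purely formal are: (i) the descent statement ``$\sigma$-stable reduced Zariski-closed subset $\Rightarrow$ defined over $\mathbb{R}$'', which holds because $\mathbb{R}$ is perfect --- alternatively one may argue concretely, via a faithful $\mathbb{R}$-embedding $G \hookrightarrow \mathrm{GL}_n$ and the description of semisimplicity through the squarefree part of the characteristic polynomial, that the non-semisimple locus is cut out by equations over $\mathbb{R}$; and (ii) the inequality $\dim_{\mathbb{R}} F(\mathbb{R}) \le \dim_{\mathbb{C}} F$ together with the fact that a lower-dimensional closed semialgebraic subset of a manifold is nowhere dense, both being standard facts about real algebraic sets (provable, for instance, via triangulation or stratification of semialgebraic sets). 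I expect point (ii) --- making precise that the non-semisimple locus really is negligible inside the real manifold $G(\mathbb{R})$ --- to be the only genuine obstacle; everything else follows at once.
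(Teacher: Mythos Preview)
Your proof is correct and follows the same strategy as the paper: use Theorem~\ref{A2} to see that the non-semisimple locus is a proper Zariski-closed subset of each Zariski component, then argue that its real points form a nowhere-dense subset of the real manifold $G(\mathbb R)$. The only packaging difference is that the paper works coset-by-coset with the irreducible $\mathbb R$-varieties $g_iG^0$ (for $g_i\in G(\mathbb R)$) and directly invokes the general fact that a nonempty Zariski-open subset of an irreducible $\mathbb R$-variety meets the smooth real locus in a Hausdorff-open dense set, thereby bypassing your Galois-descent step showing that $F=G\setminus S(G)$ is defined over~$\mathbb R$.
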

\begin{proof}
	Since $G (\mathbb R)/ G^0(\mathbb R)$ embeds in $G/G^0$, it follows that
	$G (\mathbb R) / G^0(\mathbb R)$ is a finite group. Let $g_1, \ldots, g_p \in G(\mathbb R)$ such that
	$$G (\mathbb R) = g_1 G^0(\mathbb R) \cup \cdots \cup g_p G^0(\mathbb R).$$
	It is enough to prove that for all $i = 1,\ldots,p$, the set $S(g_i G^0(\mathbb R))$ contains
	an open dense subset of $g_i G^0 (\mathbb R)$ in the Hausdorff topology of $G (\mathbb R)$.
	By Theorem \ref{A2}, there is a Zariski open set $W$ of $g_i G^0$  consisting of semisimple
	elements of $G$. 
	
	Let $X_{\rm sm}$ be the set of smooth points of an
	irreducible affine variety $X$ defined over $\mathbb R$,
	and $M:= X_{\rm sm} (\mathbb R) \neq \emptyset$.
	Let $W \neq \emptyset$ be a Zariski open set of $X$. Then $W \cap M$ is an
	open dense subset of $M$ in the Hausdorff topology of $M$.

	Now observe that  $ {g_i G^0}_{\rm sm} (\mathbb R) = g_i G^0 (\mathbb R)$.
	Hence by the above fact, it follows that $W \cap g_i G^0 (\mathbb R)$ is open dense in $g_i G^0(\mathbb R)$. 
	\end{proof}

We recall that for a connected Lie group $G$, an element $g\in G$ is said to be {\it regular} if the nilspace $N({\rm Ad}_g-I)$
is of minimal dimension (see \cite{Bou}). An element $g$ is $P_k$-{\it regular} if $(dP_k)_g$ is non-singular.

\begin{remark}\label{Regular}
	Let $G$ be a complex algebraic group defined over $\mathbb R$. Then any element $g\in G(\mathbb R)$ can be written as $g=g_s g_u$ where $g_s, g_u\in G(\mathbb R)$ and $g_s$ is semisimple, $g_u$ is unipotent, and $g,g_s,g_u$ commute with each other. Then we have $Ad_g=Ad_{g_s}Ad_{g_u}$. Since all the eigenvalues of $Ad_{g_u}$ are $1$, the generalized eigenspace for eigenvalue $1$ of $Ad_g$ is the same as that of $Ad_{g_s}$. Note that $N(Ad_{g_s}-I)=\ker(Ad_{g_s}-1)$ as $Ad_{g_s}$ is semisimple. The dimension of $\ker(Ad_{g_s}-1)$ is equal to the dimension of the centralizer $Z_{G(\mathbb R)}(g_s)$. So 
	$g$ is regular in $G(\mathbb R)$ if and only if 
	$Z_{G(\mathbb R)}(g_s)$ has minimal dimension. In particular, $g$ is regular in $G(\mathbb R)$ if and only if $g_s$ is regular in $G(\mathbb R)$. 
\end{remark}

 \begin{remark}
 	The definition of a regular element, mentioned above, 
 	coincide with the definition of a regular element in \S 12.2 of \cite{B} for the algebraic case. 
 	In literature, there is another notion of regular element of an algebraic group (see \cite{S}). 
 	Let $G$ be an algebraic group
 	over an algebraically closed field. An element $g\in G$ is called regular, if ${\rm dim}Z_G(g)\leq{\rm dim}Z_G(x)$ for all
 	$x\in G.$ Note that according to this definition, a unipotent element may be a regular element; 
 	in fact, if $G$ is connected and semisimple, then it admits a regular unipotent element (see \cite{S}).
 	However, our notion of regular element is different from that. For a semisimple group $G$ as above, it is easy to see that 
 	unipotent elements are not regular in our sense. Indeed, let $u$ be a unipotent element of a semisimple algebraic group $G$, then
 	${\rm Ad}_u$ is unipotent, and hence the nilspace $N({\rm Ad}_u-I)$ is the whole space, which is not of minimal dimension.
 	
 \end{remark}

To prove Theorem \ref{T1}, we need the following lemma.

\begin{lemma}\label{L1}
	Let $G$ be a complex algebraic group, not necessarily Zariski-connected, defined over $\mathbb R$. 
	Let $k\in\mathbb N$. Then the following are equivalent.
	\begin{enumerate}
		\item $P_k(S({\rm Reg}(G(\mathbb R)^*)))\supset S({\rm Reg}(G(\mathbb R)^*))$.
		
		\item $P_k:S(G(\mathbb R)^*)\to S(G(\mathbb R)^*)$ is surjective.
		
		\item The image of the map $P_k:G(\mathbb R)^*\to G(\mathbb R)^*$ is dense.
	\end{enumerate}
\end{lemma}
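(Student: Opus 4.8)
The plan is to establish the cycle of implications $(3)\Rightarrow(2)\Rightarrow(1)\Rightarrow(3)$, since $(1)$ is formally the strongest-looking statement but is actually the easiest to leverage. Throughout, write $H:=G(\mathbb R)^*$, a connected Lie group, and note that by Remark \ref{Regular} the regular elements $\mathrm{Reg}(H)$ are those $g$ whose semisimple part $g_s$ has centralizer of minimal dimension; in particular $g\in\mathrm{Reg}(H)$ iff $g_s\in\mathrm{Reg}(H)$, so the set $S(\mathrm{Reg}(H))$ of regular semisimple elements is exactly $\{g_s : g\in\mathrm{Reg}(H)\}$.

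\emph{$(3)\Rightarrow(2)$.} Suppose $P_k(H)$ is dense in $H$. First I would argue that the image of the power map, being the image under a polynomial (hence real-analytic) map of a connected manifold, contains an open set; more precisely, the set of $P_k$-regular points (where $(dP_k)_g$ is nonsingular) is Zariski-open and nonempty in $H$ — nonempty because at a suitable regular semisimple $g$ the differential $(dP_k)_g$ can be computed to be invertible when $k$ is coprime to the relevant torsion, but here one only needs nonemptiness, which follows since $P_k$ is dominant on the algebraic group $G^0$ (Borel) and $H$ is Zariski-dense in $G^0$. Hence $P_k(H)$ contains a nonempty open set $U$; combined with density, $P_k(H)$ is open and dense. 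Now take any semisimple $s\in H$. Using Proposition \ref{B} applied to a reductive quotient, or more directly the fact that in a connected Lie group every semisimple element lies in the closure of the set of regular semisimple elements, and that $P_k$ is a closed map on compact pieces after passing to a suitable conjugacy-invariant slice, one pushes the surjectivity from the open dense set down to all semisimple elements. The cleanest route: a semisimple $s$ lies in a maximal compact-like subgroup or a Cartan subgroup $C$; the restriction $P_k|_C$ is a homomorphism of abelian Lie groups, and density of $P_k(H)$ forces $P_k(C)$ to have finite index with the right torsion condition, which then gives surjectivity onto $S(H)$ by a connectedness argument on each Cartan. I expect this to be the main obstacle, as it requires care about which semisimple elements are reached and an honest reduction to tori/Cartan subgroups.

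\emph{$(2)\Rightarrow(1)$.} This is immediate: if $P_k$ is surjective on all of $S(H)$, then in particular every element of $S(\mathrm{Reg}(H))\subset S(H)$ is a $k$-th power of some semisimple element $t\in H$; and since $t^k$ is regular and $t,t^k$ have the same centralizer (they generate the same algebraic torus up to finite index, so $Z_H(t)$ and $Z_H(t^k)$ have equal dimension), $t$ is itself regular. Hence $P_k\big(S(\mathrm{Reg}(H))\big)\supset S(\mathrm{Reg}(H))$.

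\emph{$(1)\Rightarrow(3)$.} Assume $P_k\big(S(\mathrm{Reg}(H))\big)\supset S(\mathrm{Reg}(H))$. By Remark \ref{Regular} together with Theorem \ref{A2}/Proposition \ref{B} applied to the reductive quotient $H/R_u(G)(\mathbb R)\cap H$ — or directly to the algebraic group $G^0$, whose semisimple elements are Zariski-dense — the set $S(\mathrm{Reg}(H))$ is dense in $H$ in the real topology (regular semisimple elements form an open dense set: regularity is a Zariski-open condition, semisimplicity on a coset with reductive identity component is Zariski-open by Theorem \ref{A2}, and a nonempty Zariski-open set meets the smooth real points in a Hausdorff-dense set). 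Therefore $P_k(H)\supset P_k(S(\mathrm{Reg}(H)))\supset S(\mathrm{Reg}(H))$, and the latter is dense in $H$; hence $P_k(H)$ is dense in $H$, which is $(3)$. The one point to verify carefully is that $S(\mathrm{Reg}(H))$ is genuinely dense — this uses that $H=G(\mathbb R)^*$ is Zariski-dense in $G^0$ and the density statement embedded in the proof of Proposition \ref{B} (a nonempty Zariski-open subset of an irreducible real variety meets the real smooth locus in a Hausdorff-dense set).
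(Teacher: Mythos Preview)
Your cycle $(3)\Rightarrow(2)\Rightarrow(1)\Rightarrow(3)$ contains a genuine gap in the step $(1)\Rightarrow(3)$, and the step $(3)\Rightarrow(2)$ is not yet an argument.

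For $(1)\Rightarrow(3)$ you assert that $S(\mathrm{Reg}(H))$ is dense in $H=G(\mathbb R)^*$ and then conclude that $P_k(H)\supset S(\mathrm{Reg}(H))$ is dense. But the lemma is stated for an arbitrary complex algebraic group $G$, not a reductive one, and for non-reductive $G$ the set of semisimple elements need not be dense at all. Take $G=\mathbb G_a$: the only semisimple element of $H=\mathbb R$ is $0$, so $S(\mathrm{Reg}(H))=\{0\}$ is certainly not dense. Or take $G=\mathbb G_m\times\mathbb G_a$: an element $(t,x)$ is semisimple iff $x=0$, again a nowhere-dense set. Your appeal to Theorem \ref{A2}/Proposition \ref{B} does not help, since those require $G^0$ reductive; passing to the reductive quotient tells you nothing about density of semisimple elements upstairs. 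The paper avoids this by proving the stronger inclusion $\mathrm{Reg}(H)\subset P_k(H)$ (the full regular set \emph{is} always open dense): given $g=g_sg_u\in\mathrm{Reg}(H)$, one uses $(1)$ to find $h_s$ with $h_s^k=g_s$, takes the unique unipotent $k$-th root $h_u$ of $g_u$, and then argues via \cite[Lemma 2.1]{BM} that $h_s$ is $P_k$-regular, forcing $h_s$ and $h_u$ to commute so that $(h_sh_u)^k=g$. Density then follows from \cite[Theorem 1.1]{BM}. This Jordan-decomposition step is exactly the missing idea in your approach.

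For $(3)\Rightarrow(2)$ your sketch (``density forces $P_k(C)$ to have finite index with the right torsion condition'') is not an argument; as you suspect, this is where the real content lies. The paper's route is short once one quotes \cite[Theorem 1.1]{BM}: density of $P_k$ on a connected Lie group is equivalent to $P_k(C)=C$ for every Cartan subgroup $C$. Since any semisimple $g\in H$ lies in some Cartan subgroup $C$, surjectivity of $P_k|_C$ gives an $h\in C$ with $h^k=g$, and elements of Cartan subgroups of $G(\mathbb R)^*$ are semisimple (resp.\ regular if $g$ is), yielding $(2)$ and $(1)$ simultaneously. Your $(2)\Rightarrow(1)$ is fine, though note the correct inclusion is $Z_H(t)\subset Z_H(t^k)$, which already gives $\dim Z_H(t)\le\dim Z_H(t^k)=\text{min}$, hence $t$ regular.
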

\begin{proof}
	First, we will show that each of $(1)$ and $(2)$ implies $(3)$.
	%Suppose that $P_k:S(G(\mathbb R)^*)\to S(G(\mathbb R)^*)$ is surjective.
	In view of \cite[Theorem 1.1]{BM}, it is enough to show that ${\rm Reg}(G(\mathbb R)^*)\subset P_k(G(\mathbb R)^*)$.
	Let $g\in {\rm Reg}(G(\mathbb R)^*)$. Let $g=g_sg_u$ where $g_s$ and $g_u$ are respectively 
	the semisimple part and unipotent part
	of the Jordan decomposition of $g$. By Remark \ref{Regular}, $g$ is regular if and only if $g_s$ is regular.
	Hence, either $(1)$ or $(2)$ implies that there exists $h_s\in S({\rm Reg}(G(\mathbb R)^*))$ ($h_s\in S(G(\mathbb R)^*)$) such that $h_s^k=g_s$.
	Since $g_u$ is unipotent, there exists a unique unipotent element $h_u$ such that $h_u^k=g_u$.
	As the Zariski closure of the
	cyclic subgroups generated by $g_u$ and $h_u$ are the same, $g_s$ commutes with $h_u$. 
	
	Since $h_u$ is unipotent, there exists a unique nilpotent element $X\in\mathfrak {\rm Lie}(G(\mathbb R)^*)$ such
	that $h_u=\exp X$. As $g_s$ commutes with $h_u$, we have
	$g_s\exp X {g_s}^{-1}=\exp X$, which gives $$\exp(Ad_{g_s}(X))=\exp X.$$ Now by the uniqueness of the element
	$X\in\mathfrak {\rm Lie}(G(\mathbb R)^*)$, we get $Ad_{g_s}(X)= X$.
	Therefore, $Ad_{h_s^k}(X)= X$. By \cite[Lemma 2.1]{BM},
	$h_s$ is regular and $P_k$-regular. This implies $Ad_{h_s}(X)= X$, which in turn shows that
	$h_s$ commutes with $h_u$. Hence, we have $$g=g_sg_u=h_s^kh_u^k=(h_sh_u)^k.$$
	
	Now suppose that $(3)$ holds. Let $g\in S(G(\mathbb R)^*)$ or $S({\rm Reg}(G(\mathbb R)^*))$. Then there exists a Cartan subgroup $C\subset G(\mathbb R)^*$ such that $g\in C$. By \cite[Theorem 1.1]{BM}, we have $P_k(C)=C$. Thus there exists $h\in C$ such that $h^k=g$.
	Since $g$ is semisimple (regular), $h$ is also semisimple (regular), which proves the lemma.
\end{proof}

\begin{proposition}\label{disconnected reductive}
	Let $G$ be a complex reductive algebraic group, not necessarily Zariski-connected, defined over $\mathbb R$. 
	Let $k\in\mathbb N$. Then the following are equivalent.
	\begin{enumerate}
		\item The image of $P_k:G(\mathbb R)\to G(\mathbb R)$ is dense. 
		
		\item $P_k: G(\mathbb R)/G(\mathbb R)^*\to G(\mathbb R)/G(\mathbb R)^*$ is surjective and the image of $P_k:G(\mathbb R)^*\to G(\mathbb R)^*$ is dense.
		
		\item $P_k: G(\mathbb R)/G^0(\mathbb R)\to G(\mathbb R)/G^0(\mathbb R)$ is surjective and the image of $P_k:G^0(\mathbb R)\to G^0(\mathbb R)$ is dense.
	\end{enumerate}
\end{proposition}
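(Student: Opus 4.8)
The plan is to prove $(1)\Leftrightarrow(2)$ first and then obtain $(2)\Leftrightarrow(3)$ by applying $(1)\Leftrightarrow(2)$ to the reductive group $G^0$ in place of $G$. Two elementary facts will be used throughout. (i) For a finite group $F$ the power map $P_k\colon F\to F$ is surjective if and only if $(k,\circ(F))=1$: if $(k,\circ(F))=1$ then $x\mapsto x^k$ is a bijection of each cyclic subgroup, so every element is a $k$-th power, while conversely a surjective self-map of a finite set is injective, and injectivity at the identity keeps $F$ from having a nontrivial element of order dividing $k$, whence $(k,\circ(F))=1$ by Cauchy's theorem. (ii) $(k,ab)=1$ if and only if $(k,a)=1$ and $(k,b)=1$. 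Write $\Gamma=G(\mathbb R)/G(\mathbb R)^*$, which is finite since $G^0(\mathbb R)$ is clopen in $G(\mathbb R)$ and $G(\mathbb R)/G^0(\mathbb R)$ is finite.

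For $(1)\Rightarrow(2)$ I argue directly. Let $q\colon G(\mathbb R)\to\Gamma$ be the quotient map; it is continuous onto the discrete group $\Gamma$ and $q\circ P_k=P_k\circ q$. Hence $\Gamma=q(G(\mathbb R))=q\bigl(\overline{P_k(G(\mathbb R))}\bigr)\subseteq\overline{q(P_k(G(\mathbb R)))}=q(P_k(G(\mathbb R)))=P_k(\Gamma)$, so $P_k$ is surjective (hence bijective, hence injective) on $\Gamma$; in particular $\gamma^k=e$ forces $\gamma=e$. Now $G(\mathbb R)$ is the disjoint union of its finitely many clopen connected components $C_i$, with classes $\gamma_i\in\Gamma$; since $P_k(C_i)\subseteq C_{\gamma_i^k}$ and components are closed, $\overline{P_k(C_i)}\subseteq C_{\gamma_i^k}$, which meets $G(\mathbb R)^*$ only if $\gamma_i^k=e$, i.e.\ only if $C_i=G(\mathbb R)^*$. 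Intersecting $\overline{P_k(G(\mathbb R))}=G(\mathbb R)$ (a finite union of the sets $\overline{P_k(C_i)}$) with $G(\mathbb R)^*$ therefore gives $G(\mathbb R)^*=\overline{P_k(G(\mathbb R)^*)}$. Together with the surjectivity of $P_k$ on $\Gamma$ this is $(2)$.

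For $(2)\Rightarrow(1)$ I invoke the results already at hand. By Lemma~\ref{L1}, density of $P_k(G(\mathbb R)^*)$ is equivalent to surjectivity of $P_k$ on $S(G(\mathbb R)^*)$; and by (i), surjectivity of $P_k$ on $\Gamma$ is equivalent to $(k,\circ(\Gamma))=1$. Hence $(2)$ is equivalent to the conjunction ``$(k,\circ(\Gamma))=1$ and $P_k$ surjective on $S(G(\mathbb R)^*)$''. By \cite[Theorem 5.5]{Ch1} (which, taken with $A=G(\mathbb R)$, is exactly the equivalence $(3)\Leftrightarrow(4)$ of Theorem~\ref{T1}), this conjunction is equivalent to surjectivity of $P_k\colon S(G(\mathbb R))\to S(G(\mathbb R))$. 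Finally, if $P_k$ is surjective on $S(G(\mathbb R))$ then $S(G(\mathbb R))\subseteq P_k(G(\mathbb R))$, and since $G$ is reductive, Proposition~\ref{B} gives that $S(G(\mathbb R))$ is dense in $G(\mathbb R)$; therefore $P_k(G(\mathbb R))$ is dense, i.e.\ $(1)$. This implication carries the real content: it is precisely here that reductivity enters (through Proposition~\ref{B}), and the delicate point — the extra $2$-torsion in the real component groups of maximal tori of $G^0$, which is invisible in $\Gamma$ — is exactly what \cite[Theorem 5.5]{Ch1} packages, so that bookkeeping would be the main obstacle to a self-contained proof of this step.

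For $(2)\Leftrightarrow(3)$ I apply the now-proven equivalence $(1)\Leftrightarrow(2)$ to $G^0$, which is Zariski connected, defined over $\mathbb R$, reductive (because $R_u(G^0)=R_u(G)$ is trivial), and satisfies $(G^0)^0=G^0$ and $G^0(\mathbb R)^*=G(\mathbb R)^*$ (as $G^0(\mathbb R)$ is clopen in $G(\mathbb R)$). This gives: $P_k(G^0(\mathbb R))$ is dense $\iff$ $(k,\circ(G^0(\mathbb R)/G(\mathbb R)^*))=1$ and $P_k(G(\mathbb R)^*)$ is dense. Combining this with (i) applied to $G(\mathbb R)/G^0(\mathbb R)$, with (ii), and with the identity $\circ(\Gamma)=\circ(G(\mathbb R)/G^0(\mathbb R))\cdot\circ(G^0(\mathbb R)/G(\mathbb R)^*)$ (Lagrange's theorem for $G(\mathbb R)^*\trianglelefteq G^0(\mathbb R)\trianglelefteq G(\mathbb R)$), condition $(3)$ — surjectivity of $P_k$ on $G(\mathbb R)/G^0(\mathbb R)$ together with density of $P_k(G^0(\mathbb R))$ — unwinds exactly to condition $(2)$, which completes the proof.
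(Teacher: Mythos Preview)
Your proof is correct and follows essentially the same approach as the paper, which merely records that the result ``follows by applying \cite[Theorem 5.5]{Ch1} and Lemma~\ref{L1}'' and omits the details; you supply precisely those details, using in addition Proposition~\ref{B} (density of semisimple elements in the reductive case) and the elementary finite-group facts (i), (ii) to handle the component-group bookkeeping. Your direct argument for $(1)\Rightarrow(2)$ via the clopen component decomposition is a pleasant elementary substitute for invoking the heavier machinery in that direction, but the overall route is the one the paper intends.
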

\begin{proof} This follows by applying  \cite[Theorem 5.5]{Ch1} and Lemma \ref{L1} so we omit the details.
\end{proof}

\section{Proof of Theorem \ref{T1}}

We use the following additional lemma to prove Theorem \ref{T1}.

\begin{lemma}\label{surjectivity of abelian subgroup}
	Let $G$ be a complex reductive algebraic group, not necessarily Zariski-connected, defined over $\mathbb R$. Let $k\in\mathbb N$. Let $G(\R)^*\subset A\subset G(\R)$.
	Suppose that $P_k(A)$ is dense in $A$. Then
	for any $s\in S(A)$ there exists a closed abelian subgroup $B_s$ (containing $s$) of $A$ with finitely many connected components such that $P_k:B_s\to B_s$ is surjective.
\end{lemma}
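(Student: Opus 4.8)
The plan is to realise $B_s$ as (the part lying in $A$ of) a commutative algebraic subgroup of $G$ attached to $s$, and to reduce surjectivity of $P_k$ on it to a divisibility statement about its finite group of connected components. First I would extract two consequences of the hypothesis that $P_k(A)$ is dense in $A$. Projecting along the quotient homomorphism $A\to A/G(\mathbb R)^*$ onto the finite discrete group $A/G(\mathbb R)^*$ shows that $P_k$ is surjective on $A/G(\mathbb R)^*$; a surjective power map on a finite group is bijective, hence $(k,\circ(A/G(\mathbb R)^*))=1$. Moreover $G(\mathbb R)^*$ is an open (hence closed) normal subgroup of $A$, and for $y\in A$ the class $y^kG(\mathbb R)^*$ equals $(yG(\mathbb R)^*)^k$, so $y^k\in G(\mathbb R)^*$ forces $yG(\mathbb R)^*$ to have order dividing $\gcd(k,\circ(A/G(\mathbb R)^*))=1$; thus $P_k(A)\cap G(\mathbb R)^*=P_k(G(\mathbb R)^*)$, and density of $P_k(A)$ in $A$ forces $P_k(G(\mathbb R)^*)$ to be dense in $G(\mathbb R)^*$. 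By \cite[Theorem 1.1]{BM} (used exactly as in the proof of Lemma \ref{L1}) this yields $P_k(C)=C$ for every Cartan subgroup $C$ of the connected real reductive group $G(\mathbb R)^*$; since each such $C$ is abelian with $C/C^*$ a finite $2$-group, it follows in addition that if $k$ is even then every Cartan subgroup of $G(\mathbb R)^*$ is connected.

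Next I would build $B_s$. Let $D=\overline{\langle s\rangle}$ be the Zariski closure of $\langle s\rangle$ in $G$: a diagonalizable $\mathbb R$-subgroup with identity component $D^0$ a torus and with cyclic component group generated by $s$. Since $G^0$ is reductive and $s$ is semisimple, $Z_G(s)^0$ is reductive, contains $D^0$, and is centralized by $s$ (as $s$ is central in $Z_G(s)$; consequently the Zariski closed subgroup $Z_G(T)$ contains $D$ for every torus $T\subset Z_G(s)^0$). Choose a maximal $\mathbb R$-torus $T$ of $Z_G(s)^0$ containing $D^0$ — chosen, where possible, also to absorb the powers of $s$ lying in $G^0$ — and put $Q:=D\cdot T$. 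Then $Q$ is a commutative $\mathbb R$-subgroup of $G$ all of whose elements are semisimple, with $Q^0=T$ and $Q/Q^0$ cyclic generated by $s$, and $s\in Q(\mathbb R)$. Set $B_s:=Q(\mathbb R)\cap A$. It is a closed (as $A$ is open, hence closed) abelian subgroup of $A$ containing $s$, with finitely many components, whose identity component is $B_s^*=T(\mathbb R)^*$, a connected abelian Lie group.

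It remains to prove $P_k(B_s)=B_s$, and here I would first record the elementary fact: for an abelian topological group $B$ with finitely many components and connected identity component $B^*$ one has $P_k(B)=B$ if and only if $(k,\circ(B/B^*))=1$. Indeed $P_k$ is always surjective on a connected abelian Lie group $B^*$ (such a group is $\mathbb R^a\times\mathbb T^b$), and, using commutativity, any preimage of $bB^*$ under $P_k$ on $B/B^*$ can be corrected by a preimage in $B^*$ to produce a preimage of $b$ in $B$. So the lemma reduces to $(k,\circ(B_s/B_s^*))=1$. The component group $B_s/B_s^*$ sits in a short exact sequence whose quotient is $B_s/(B_s\cap G(\mathbb R)^*)$, which embeds into $A/G(\mathbb R)^*$ and hence has order coprime to $k$ by the first paragraph, and whose kernel is $\pi_0(B_s\cap G(\mathbb R)^*)=\pi_0(Q(\mathbb R)\cap G(\mathbb R)^*)$. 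The latter is $\pi_0$ of an abelian group of semisimple elements of $G(\mathbb R)^*$ lying inside some Cartan subgroup of $G(\mathbb R)^*$, and, being essentially cut out from the real points of the torus $T$, it is a finite $2$-group apart from a cyclic contribution coming from $s$; if $k$ is odd this is already coprime to $k$, and in general it is what must be killed.

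The main obstacle is precisely this last point: controlling the $2$-torsion in $\pi_0(Q(\mathbb R)\cap G(\mathbb R)^*)$ produced by the real points of tori (and the cyclic torsion coming from $s$ protruding out of $T$). When $k$ is even one knows from the first paragraph that every Cartan subgroup of $G(\mathbb R)^*$ is connected, but a subgroup of a connected group need not itself be connected, so this does not immediately give connectedness of $Q(\mathbb R)\cap G(\mathbb R)^*$; one has to use the freedom in choosing $T$ inside $Z_G(s)^0$ — absorbing into $T$ the largest power of $s$ that lies in $G^0$, and taking $T$ as compact as the density of $P_k(G(\mathbb R)^*)$ allows — so that every offending component of $Q(\mathbb R)$ disappears upon intersecting with $G(\mathbb R)^*$. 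Making this transfer of connectedness from $G(\mathbb R)^*$ to the torus part of $Z_G(s)$ precise is the technical heart of the proof; the remaining verifications are routine bookkeeping.
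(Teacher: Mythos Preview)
Your outline has the right shape up to the last paragraph, but the gap you yourself flag is a genuine one, and you do not close it. Reducing $P_k(B_s)=B_s$ to the coprimality $(k,\circ(B_s/B_s^*))=1$ is correct, and the quotient part $B_s/(B_s\cap G(\mathbb R)^*)\hookrightarrow A/G(\mathbb R)^*$ is fine. But for the kernel $\pi_0(Q(\mathbb R)\cap G(\mathbb R)^*)$ you offer only heuristics: the assertion that it ``lies inside some Cartan subgroup of $G(\mathbb R)^*$'' is not justified, and even granting that Cartan subgroups of $G(\mathbb R)^*$ are connected when $k$ is even does not make a subgroup of one connected. The phrase ``taking $T$ as compact as the density of $P_k(G(\mathbb R)^*)$ allows'' is not an argument; you never use the full strength of the hypothesis $P_k:S(G(\mathbb R)^*)\to S(G(\mathbb R)^*)$ surjective beyond the Cartan--subgroup statement, and that is not enough to force the $2$-torsion (or the cyclic piece from $s$) to vanish for an arbitrary admissible choice of $T$.

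The paper sidesteps this entirely by importing the construction from the proof of \cite[Theorem~5.5]{Ch1} rather than trying to compute a component group. There one works with $s^m\in G(\mathbb R)^*$ (where $m=\circ(A/G(\mathbb R)^*)$), picks a maximal $\mathbb R$-torus $T$ of $Z_G(s^m)$ with $sTs^{-1}=T$, and Chatterjee's argument already gives two facts: $P_k$ is surjective on the connected abelian group $Z_{T(\mathbb R)^*}(s)$, and $s=(t^c s^d)^k$ for some $t\in Z_{T(\mathbb R)^*}(s)$ and integers $c,d$. Taking $B_s$ to be the group generated by $Z_{T(\mathbb R)^*}(s)$ and $s$ then makes $P_k(B_s)=B_s$ immediate, with no component-group bookkeeping needed. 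The point is that the surjectivity on the connected part and the explicit $k$-th root of $s$ are outputs of \cite{Ch1}, not something to be re-derived; your approach tries to reprove a piece of that theorem from weaker information and stalls at exactly the step where Chatterjee's finer analysis is required.
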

\begin{proof}
	In view of Propositions \ref{B} and \ref{disconnected reductive}, we note that $P_k(A)$ is dense in $A$ if and only if $P_k:S(A)\to S(A)$ is surjective. By \cite[Theorem 5.5]{Ch1}, $P_k:S(A)\to S(A)$ is surjective if and only if $k$ is co-prime to the order of $A/G(\mathbb R)^*$ and $P_k:S(G(\mathbb R)^*)\to S(G(\mathbb R)^*)$ is surjective. We conclude our assertion by following the arguments of the proof of \cite[Theorem 5.5(1)]{Ch1}. 
	
	Let $s\in S(A)$. If the order of $A/G(\mathbb R)^*$ is $m$ then $s^m\in G(\mathbb R)^*$. On page-230 of \cite[Theorem 5.5]{Ch1}, it is shown that there exists a maximal $\mathbb R$-torus $T$ of $G'=Z_G(s^m)$ such that $sTs^{-1}=T$ and $P_k:Z_{T(\mathbb R)^*}(s)\to
	Z_{T(\mathbb R)^*}(s)$ is surjective. Moreover, it is shown in Case 1 and Case 2 of \cite[Theorem 5.5]{Ch1} that $s=(t^cs^d)^k$ for some $c,d\in\mathbb Z$, where $t\in Z_{T(\mathbb R)^*}(s)$. Also, $s^n\in Z_{T(\R)^*}(s)$ for some positive integer $n$.
	
	Now we consider the subgroup $H$ of $A$ generated by $Z_{T(\mathbb R)^*}(s)$ and the element $s$. Then $H$ is a closed abelian subgroup of $A$ and $H/H^*$ is finite. Further, $P_k:H\to H$ is surjective as $P_k:Z_{T(\mathbb R)^*}(s)\to
	Z_{T(\mathbb R)^*}(s)$ is surjective and $s=(t^cs^d)^k\in H$. Now set $B_s:=H$.
\end{proof}

Next we state a result from \cite{DM} (see also \cite{Da} and \cite{Da1}) which we use in the proof of Theorem \ref{T1}.

Let $G$ be a Lie group and $N$ be a simply connected nilpotent normal subgroup of $G$. Let $A=G/N$ be abelian (not necessarily connected). 
Let $N_j$, $j=0,1,\ldots,r$, be closed connected normal subgroups of $G$ contained in $N$ such that
$$N=N_0\supset N_1\supset\cdots\supset N_r=\{e\}$$ 
with $[N,N_j]\subset N_{j+1}$. For $j\in\{0,1,\ldots,r-1\}$, let $V_j=N_j/N_{j+1}$. Then $V_j$  is a finite dimensional vector space over $\mathbb R$ for each $j$ and the $G$-action on $N$ by conjugation induces an action on $V_j$ for all $j$. Moreover, the restriction of the action to $N$ is trivial, and hence it induces an action of $G/N$ on each $V_j$, $j\in\{0,1,\ldots,r-1\}$.
The existence of such a series can be ensured by taking the central series of $N$.
We refine the sequence
$N_j$ by inserting more terms in between if necessary, and assume that the $G$-action (and hence the $G/N$-action) on $N_j/N_{j+1}$ is irreducible for all $j$.

	\begin{theorem}\rm{(\cite{DM})}\label{DaM}
		Let $G$, $N$, $A$ be as above. Let $N=N_0\supset N_1\supset\cdots\supset N_r=\{e\}$ be a series of closed connected normal subgroups $N_j$ of $N$ as above such that the action of $A=G/N$ on the vector space $V_j=N_j/N_{j+1}$ (induced by conjugation) is irreducible for all $j\in\{0,1,\ldots,r-1\}$.
	 Let $x\in G$ and $a=xN\in A$.
	 Suppose there exists $b\in A$ with $b^k=a$ such that for any $V_j$, if the action of $a$ on $V_j$ is trivial, then the action of $b$ on $V_j$ is trivial. Then for $n\in N$ there exists $y\in G$ such that $yN=b$ and $y^k=xn$.
\end{theorem}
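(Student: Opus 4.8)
The plan is to prove Theorem~\ref{DaM} by induction on the length $r$ of the series $N=N_0\supset\cdots\supset N_r=\{e\}$. When $r=0$ we have $N=\{e\}$, so $G=A$ and $y=b$ works. For the inductive step set $M:=N_{r-1}$. Since $[N,M]\subset N_r=\{e\}$, $M$ is central in $N$; being a closed connected subgroup of the simply connected nilpotent group $N$, it is, via $\exp$, a finite-dimensional real vector space, namely $M=V_{r-1}$, on which $A=G/N$ acts $\R$-linearly and (after the refinement) irreducibly.

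Now pass to $\bar G:=G/M$. Then $\bar N:=N/M$ is simply connected nilpotent, $\bar G/\bar N\cong A$, and $\bar N=N_0/M\supset\cdots\supset N_{r-1}/M=\{e\}$ is a series of the required type of length $r-1$ whose successive quotients are exactly $V_0,\dots,V_{r-2}$ with the same $A$-actions; in particular the hypothesis relating $a$ and $b$ is inherited. Applying the induction hypothesis in $\bar G$ to the image of $xn$ (with root $b$) and lifting back to $G$ yields $y_0\in G$ with $y_0N=b$ and $y_0^k\equiv xn\pmod M$, say $y_0^k=xn\,m_0$ with $m_0\in M$. It remains to correct $y_0$ by an element of $M$.

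Let $\tau$ be the $\R$-linear automorphism of $M$ induced by $m\mapsto y_0^{-1}my_0$; since the elements of $N$ (in particular $m_0$) act trivially on $V_{r-1}$, $\tau$ is precisely the action of $b^{-1}\in A$ on $M$, and $\tau^k$ is the action of $a^{-1}$. For $\mu\in M$ a direct computation, using that $M$ is abelian, gives
$$(y_0\mu)^k=y_0^k\cdot\tau^{k-1}(\mu)\cdots\tau(\mu)\,\mu=xn\,m_0\,S(\mu),\qquad S:=I+\tau+\cdots+\tau^{k-1}.$$
Since $y_0\mu$ still lies in the coset $b$, it suffices to solve $S(\mu)=m_0^{-1}$, i.e.\ to show that the $\R$-linear operator $S$ on the finite-dimensional space $M$ is surjective, equivalently injective.

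To prove $S$ injective, split into two cases. If $a$ acts trivially on $V_{r-1}$, then by hypothesis $b$ does too, so $\tau=I$ and $S=kI$, which is invertible because $k\ge 1$. If $a$ acts nontrivially on $V_{r-1}$ and $S(v)=0$, then applying $I-\tau$ gives $(I-\tau^k)(v)=0$, i.e.\ $v\in\ker(a^{-1}-I)$; as $A$ is abelian this kernel is an $A$-invariant subspace of $V_{r-1}$, and it is proper since $a$ acts nontrivially, so irreducibility of the $A$-action forces $v=0$. This closes the induction. The step I expect to be the main obstacle is the bookkeeping in the previous paragraph: correctly identifying the conjugation operators $\tau$ and $\tau^k$ with the $A$-actions of $b^{-1}$ and $a^{-1}$ on $V_{r-1}$, which is exactly where the hypothesis relating triviality of $a$ and of $b$ on each $V_j$, and the irreducibility obtained by refining the series, enter the argument.
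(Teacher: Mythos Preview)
Your inductive argument is correct: the key identity $(y_0\mu)^k=y_0^k\,\tau^{k-1}(\mu)\cdots\tau(\mu)\mu$ holds because $M=N_{r-1}$ is abelian, and your dichotomy on whether $a$ acts trivially on $V_{r-1}$ is exactly what forces $S=I+\tau+\cdots+\tau^{k-1}$ to be invertible, with the abelianness of $A$ and the irreducibility of the $A$-action used precisely in the nontrivial case to kill the $A$-invariant subspace $\ker(I-\tau^k)$.

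The paper does not give its own proof of this statement; it simply records that the result is immediate from Corollary~5.2 and Theorem~1.1(i) of \cite{DM}. What you have written is essentially the direct proof underlying those cited results: an induction on the length of the refined series, reducing at each stage to solving a linear equation on the bottom abelian layer. So there is no substantive difference in approach, only in presentation --- you have unpacked the cited argument in full, whereas the paper outsources it. One small cosmetic point: you freely switch between multiplicative notation in $M$ (writing $m_0^{-1}$) and additive notation (writing $S=I+\tau+\cdots$); it would read more cleanly to fix one convention for the vector space $M=V_{r-1}$.
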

The proof is immediate from Corollary 5.2 and Theorem 1.1(i) of \cite{DM}.\\

We now give a proof of Theorem \ref{T1}.\\

\noindent{\it Proof of Theorem~\ref{T1}}:
$(1)\Rightarrow(2):$ This is obvious.

$(2)\Rightarrow(1):$ We first give a brief outline of the proof and later explain the steps.
In step 1, we assume $G(\mathbb R)=L(\mathbb R)\ltimes R_u(G)(\mathbb R)$ for some Levi subgroup $L$ (of $G$) defined over $\mathbb R$ and consider a dense set $D=S(A\cap L(\mathbb R))R_u(G)(\mathbb R)$ of $A$. By using the hypothesis, we see that $P_k:S(A\cap L(\mathbb R))\to S(A\cap L(\mathbb R))$ is surjective, i.e., $P_k(A\cap L(\mathbb R))$ is dense. In step 2, to each element $x_0\in S(A\cap L(\mathbb R))$ we associate a subgroup $B_{x_0}$ of $A$ such that $B_{x_0}$ is abelian with finitely many components and $P_k:B_{x_0}\to B_{x_0}$ is surjective. Then we construct a solvable subgroup $G_{x_0}$ of $A$ given by $G_{x_0}=B_{x_0}\ltimes R_u(G)(\mathbb R)$.  We show there exists a dense open set $W_{x_0}$ of $G_{x_0}$ such that each element of $W_{x_0}$ has $k$-th root in $G_{x_0}$ (and hence in $A$). In step 3, we give an existence of a dense set $W$ (of $A$) whose  elements have $k$-th roots in $A$ as required.

\medskip
\noindent{\bf Step 1:}
Since  $S(L(\mathbb R))$ is dense in $L(\mathbb R)$ by Proposition \ref{B}, and $A\cap L(\mathbb R)$ is open in $L(\mathbb R)$,
$S(A\cap L(\mathbb R))$ is dense in $A\cap L(\mathbb R)$.
Since $G(\mathbb R)^*\subset A\subset G(\mathbb R)$, we have
$L(\mathbb R)^*\subset A\cap L(\mathbb R)\subset L(\mathbb R)$.
Note that
if $(k,\circ(A/G(\mathbb R)^*))=1$,
then $(k,\circ((A\cap L(\mathbb R))/L(\mathbb R)^*))=1$.
Also $P_k(G(\mathbb R)^*)$ is dense in $G(\mathbb R)^*$, 
which implies that $P_k(L(\mathbb R)^*)$ is dense in $L(\mathbb R)^*$.
Now by Lemma \ref{L1} applied to the reductive group $L(\mathbb R)^*$, we have
$P_k:S(L(\mathbb R)^*)\to S(L(\mathbb R)^*)$ is surjective.
So $P_k:S(A\cap L(\mathbb R))\to S(A\cap L(\mathbb R))$ is surjective by \cite[Theorem 5.5]{Ch1}. Hence $P_k(A\cap L(\mathbb R))$ is dense in $A\cap L(\mathbb R)$.

\medskip
\noindent{\bf Step 2:}
For a given $x_0\in S(A\cap L(\mathbb R))$,  by Lemma \ref{surjectivity of abelian subgroup}, 
there exists a closed abelian subgroup $B_{x_0}$ of $A\cap L(\mathbb R)$ containing $x_0$ and $P_k(B_{x_0})=B_{x_0}$. 
Also, $B_{x_0}$ has finitely many connected components in real topology. 
Now consider the subgroup $G_{x_0}=B_{x_0}\ltimes R_u(G)(\mathbb R)$ of $A$.
Note that the unipotent radical $N=R_u(G)(\mathbb R)$ is a simply connected nilpotent Lie group. 
Let $$N=N_0\supset N_1\supset\cdots\supset N_r=\{e\}$$
be the series as in Theorem \ref{DaM}. Let $V_j:=N_j/N_{j+1}$ for $j=0,1,\ldots,r-1.$ As $N$ is simply connected, $V_j$'s are all finite dimensional real vector spaces, and $B_{x_0}$-action on $V_j$'s are irreducible.

Now
for $j=0,1,\ldots,r-1$,
let $F_j$ be the set of elements of $B_{x_0}$ which act trivially on $V_j$ under conjugation action.
Note that $F_j$ is a closed subgroup of $B_{x_0}$ for all $j.$
We shall consider the following two cases separately.

i) ${\rm dim}(F_j)<{\rm dim}(B_{x_0})$ for all $j$,

ii) for some $j$, ${\rm dim}(F_j)={\rm dim}(B_{x_0}).$

Suppose $(i)$ holds. Then $U':=B_{x_0}-\cup_jF_j$ is a dense open set in $B_{x_0}.$ Therefore
all elements in $U'$ act non trivially on all $V_j$'s, and hence by Theorem \ref{DaM}, $gN\subset P_k(G_{x_0})$
for all $g\in U'.$ If we take $W_{x_0}=U'\times N$, then $W_{x_0}$ is a dense open subset of $G_{x_0}$ such that $W_{x_0}\subset P_k(G_{x_0}).$

Now suppose $(ii)$ holds. Let $\mathcal I\subset\{0,1,\ldots,r-1\}$ be the set of indices such that  ${\rm dim}(F_j)={\rm dim}(B_{x_0})$ for all $j\in \mathcal I$. Then $\cup_{i\notin\mathcal{I}}F_i$ is a
proper closed analytic subset of $B_{x_0}$ of smaller dimension. Note that $B_{x_0}\setminus \cup_{i\notin\mathcal{I}}F_i$ is a dense open set in $B_{x_0}$.\\

 {\underline {Claim:}} the coset $xN\subset P_k(G_{x_0})$ for all $x\in B_{x_0}\setminus \cup_{i\notin\mathcal{I}}F_i$.\\
 
If ${\rm dim}(F_j)={\rm dim}(B_{x_0})$,
then $F_j$ is the union of some connected components of $B_{x_0}$ (containing the identity component of $B_{x_0}$). 
Fix $j\in\mathcal {I}$.
We will show that for any $x\in F_j\setminus \cup_{i\notin\mathcal{I}}F_i$ the coset $xN\subset  P_k(G_{x_0})$. 

It might happen that $x$ belongs to $F_s$ (for $s\in\mathcal I$) other than $F_j$. Let $\mathcal{J}=\{m\in \mathcal{I}|x\in F_m\}$. Clearly, $j\in\mathcal J$ and $x\in \cap_{i\in \mathcal {J}}F_i\subset F_j$. To prove the coset $xN$ has a $k$-th root, we will apply Theorem \ref{DaM}, and for that we will show the existance of a $k$-th root of $x$ in $\cap_{i\in \mathcal {J}}F_i$.

Since $P_k(B_{x_0})=B_{x_0}$, we get $P_k:B_{x_0}/B_{x_0}^*\to B_{x_0}/B_{x_0}^*$ is surjective, and hence 
$k$ is co-prime to the order of the component group $\Gamma=B_{x_0}/B_{x_0}^*$. Let the cardinality
of $\Gamma$ be $n$. Then there exist integers $a$ and $b$ such that $ak+bn=1$. Without loss of generality, we can choose $a$ to be a positive integer.
Note that $x=x^{ak+bn}=(x^a)^k(x^n)^b$ where $x^n\in B_{x_0}^*$ and so does $(x^n)^b$. As $B_{x_0}^*$ is a connected abelian group,
it is divisible. So there exists $x'\in B_{x_0}^*$ such that $x'^k=(x^n)^b$. This gives $x=(x^a)^kx'^k=(x^ax')^k$
as both $x^a$ and $x'$ commute with each other.
Clearly $x^ax'\in\cap_{i\in \mathcal J }F_i\subset F_j$. 
This implies that for $l\in\{0,1,\ldots,r-1\}$, whenever the action of $x$ on $V_l$ is trivial there exists a $k$-th root of $x$ (namely $x^ax'$) which acts trivially on $V_l$. Therefore by Theorem \ref{DaM} we conclude that $xn\in  P_k(G_{x_0})$ for all $n\in N$.

Finally, since $j\in\mathcal {I}$ is arbitrary, this proves the claim.
Hence, there exists a dense open
set $W'$ in $B_{x_0}$ such that $W'_{x_0}\subset P_k(G_{x_0})$ where $W'_{x_0}=W'\times N.$ 
Thus for both cases we have a dense open set $W_{x_0}$ in $G_{x_0}$ such that each element of $W_{x_0}$ has a $k$-th root in $G_{x_0}$, and hence in $A$.

\medskip
\noindent{\bf Step 3:}
We note that $S(A\cap L(\mathbb R))N=\cup_{x_0\in S(A\cap L(\mathbb R))}G_{x_0}$. Now set $$W=\cup_{x_0\in S(A\cap L(\mathbb R))} W_{x_0}.$$ Then $W$ is a dense set in $A$ such that each element of $W$ has a $k$-th root in $A$, which proves $(2)$ implies $(1)$.

$(2)\Leftrightarrow (3)$ Follows from Lemma \ref{L1}.

$(3)\Leftrightarrow (4)$ Follows from \cite[Theorem 5.5]{Ch1}.

A. Borel and Tits showed that if G is a Zariski connected algebraic group defined over $\mathbb R$, then either
$G(\mathbb R)=G(\mathbb R)^*$ or $G(\mathbb R)/G(\mathbb R)^*$ is a direct product of cyclic groups of order two
(see \cite[Theorem 14.4]{BT}).
Hence, the result follows immediately from the above.
\qed

\begin{remark}
Let $G$ be an algebraic group over $\mathbb{C}$, which is not necessarily Zariski-connected. 
	Let $G(\mathbb C)$ denote the complex point of $G$.
	Let $k\in\mathbb N$. Then it is immediate that
	$P_k(G(\mathbb C))$ is dense in $G(\mathbb C)$ if and only if $k$ is co-prime to the order of $G/G^0$.
\end{remark}

Theorem \ref{T1} asserts the following corollary.

\begin{corollary}\label{C}
	Let $G$ be as in Theorem \ref{T1}. Let $G(\mathbb R)=L(\mathbb{R})\ltimes R_u(G)(\mathbb R)$ and $k\in\mathbb N$.
	Then $P_k(G(\mathbb R))$ is dense if and only if $P_k(L(\mathbb R))$ is dense.
\end{corollary}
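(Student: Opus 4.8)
\textbf{Proof plan for Corollary \ref{C}.}
The plan is to derive this directly from the equivalence $(1)\Leftrightarrow(2)$ of Theorem \ref{T1} applied with $A=G(\mathbb R)$. First I would observe that, since $G(\mathbb R)=L(\mathbb R)\ltimes R_u(G)(\mathbb R)$ with $R_u(G)(\mathbb R)$ connected (it is a simply connected nilpotent group), the quotient $G(\mathbb R)/G(\mathbb R)^*$ is naturally isomorphic to $L(\mathbb R)/L(\mathbb R)^*$; indeed $G(\mathbb R)^*=L(\mathbb R)^*\ltimes R_u(G)(\mathbb R)$, so the projection $G(\mathbb R)\to L(\mathbb R)$ induces an isomorphism on component groups. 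In particular $\circ(G(\mathbb R)/G(\mathbb R)^*)=\circ(L(\mathbb R)/L(\mathbb R)^*)$, so the arithmetic condition ``$(k,\circ(A/G(\mathbb R)^*))=1$'' is the same for $G(\mathbb R)$ as for $L(\mathbb R)$.

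Next I would dispose of the second condition in clause $(2)$ of Theorem \ref{T1}, namely density of $P_k(G(\mathbb R)^*)$ in $G(\mathbb R)^*$. By Lemma \ref{L1}, $P_k(G(\mathbb R)^*)$ is dense if and only if $P_k:S(G(\mathbb R)^*)\to S(G(\mathbb R)^*)$ is surjective, and the same lemma applied to the reductive group $L$ says $P_k(L(\mathbb R)^*)$ is dense iff $P_k:S(L(\mathbb R)^*)\to S(L(\mathbb R)^*)$ is surjective. Since every semisimple element of $G(\mathbb R)^*$ is conjugate into a Levi factor and the semisimple elements of $G(\mathbb R)^*$ lying in $L(\mathbb R)^*$ are exactly $S(L(\mathbb R)^*)$ (up to conjugacy, using that semisimple parts of elements of $G(\mathbb R)^*$ can be taken in $L(\mathbb R)^*$), surjectivity of $P_k$ on $S(G(\mathbb R)^*)$ is equivalent to surjectivity of $P_k$ on $S(L(\mathbb R)^*)$. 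Thus the two conditions ``$P_k(G(\mathbb R)^*)$ dense'' and ``$P_k(L(\mathbb R)^*)$ dense'' coincide.

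Combining these two reductions: by Theorem \ref{T1} $(1)\Leftrightarrow(2)$ applied to $A=G(\mathbb R)$, density of $P_k(G(\mathbb R))$ is equivalent to the conjunction of $(k,\circ(G(\mathbb R)/G(\mathbb R)^*))=1$ and density of $P_k(G(\mathbb R)^*)$; by the previous two paragraphs this conjunction is equivalent to $(k,\circ(L(\mathbb R)/L(\mathbb R)^*))=1$ together with density of $P_k(L(\mathbb R)^*)$, which by Theorem \ref{T1} $(1)\Leftrightarrow(2)$ applied to $A=L(\mathbb R)$ is exactly density of $P_k(L(\mathbb R))$. This yields the claimed equivalence.

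I expect the main technical point to be the identification of semisimple classes of $G(\mathbb R)^*$ with those of $L(\mathbb R)^*$ in the second paragraph — specifically, showing that the semisimple part of an element of $G(\mathbb R)^*$ can be conjugated into $L(\mathbb R)^*$ and that this respects the $P_k$-surjectivity question; this is where one uses conjugacy of maximal reductive (Levi) subgroups over $\mathbb R$ and the behaviour of Jordan decomposition. Everything else is bookkeeping with component groups and semidirect-product structure, which is routine given the semidirect decomposition hypothesis. Alternatively, one could shortcut the whole argument by noting that this is essentially the case $A=G(\mathbb R)$ of the mechanism already exhibited in Step 1 of the proof of $(2)\Rightarrow(1)$ in Theorem \ref{T1}, where precisely the density of $P_k$ on $A\cap L(\mathbb R)$ was shown to be governed by the same data; I would state the corollary as an immediate consequence and include only the component-group and semisimple-class identifications as justification.
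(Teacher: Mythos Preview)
Your proposal is correct, and the ``alternative'' you mention in your final paragraph is exactly the paper's proof: the paper simply writes that the proof follows using the same procedure as in the proof of Theorem~\ref{T1} applied to the group $A=G(\mathbb R)$, with no further details.

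Your primary route --- applying the \emph{statement} of Theorem~\ref{T1} to both $G$ and $L$ and matching the two conditions in clause~$(2)$ --- is a valid and more explicit unpacking. The component-group identification $G(\mathbb R)/G(\mathbb R)^*\cong L(\mathbb R)/L(\mathbb R)^*$ is immediate from $G(\mathbb R)^*=L(\mathbb R)^*\ltimes R_u(G)(\mathbb R)$, as you say. For the second matching, the point you flag as the main technical step can be handled more cheaply than via conjugacy of Levi factors: one implication (density on $G(\mathbb R)^*$ $\Rightarrow$ density on $L(\mathbb R)^*$) is trivial since $L(\mathbb R)^*$ is the quotient $G(\mathbb R)^*/R_u(G)(\mathbb R)$; for the converse, given $s\in S(G(\mathbb R)^*)$ one conjugates $s$ into $L(\mathbb R)^*$ by an element of $R_u(G)(\mathbb R)$ (Mostow conjugacy of Levi subgroups over $\mathbb R$), finds a $k$-th semisimple root there, and conjugates back. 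This avoids the more delicate issue of finding a $k$-th root of $s\in S(L(\mathbb R)^*)$ \emph{inside} $L(\mathbb R)^*$ starting from a root in $S(G(\mathbb R)^*)$, which your sketch glosses over. By contrast, the paper's one-line reference to the proof procedure sidesteps all of this: Step~1 of $(2)\Rightarrow(1)$ already produces density of $P_k(A\cap L(\mathbb R))=P_k(L(\mathbb R))$ from condition~$(2)$, and Steps~2--3 then give density of $P_k(G(\mathbb R))$ directly from surjectivity of $P_k$ on $S(L(\mathbb R))$, so no separate semisimple-class identification is needed.
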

\begin{proof}
	The proof follows using the same procedure as in the proof of Theorem \ref{T1} applied to the group $A=G(\mathbb R)$.
\end{proof}

\section{Application to weak exponentiality}

For a Lie group $G$ with Lie algebra ${\rm Lie}(G)$, let $\exp:{\rm Lie}(G)\to G$ be the exponential map of $G$.
We recall that $G$ is {\it weakly exponential} if $\exp({\rm Lie}(G))$ is dense in $G$. The group $G$ is said to 
{\it exponential}
if $\exp({\rm Lie}(G))=G$.

\begin{remark}\label{C1}
	%We use the same notations as in Theorem \ref{T1}.
	 In view of $(1)$ $\Rightarrow$ $(2)$
	in Theorem \ref{T1}, we observe that
	$P_k(A)$ is dense implies $k$ is co-prime to the order of $A/G(\mathbb R)^*$ and  $P_k(G(\mathbb R)^*)$ is
	dense. By \cite[Corollary 1.3]{BM}, it follows that $P_k(A)$ is dense for all $k$ if and only if $A$ is weakly 
	exponential.
\end{remark}

Remark \ref{C1} can be thought of as the analogous result of Hofmann and Lawson 
\cite[Proposition 1]{HL}, which states the following:
For a closed subgroup $H$ (possibly disconnected) of a connected Lie group, 
$H$ is exponential if and only if $H$ is divisible, i.e., $P_k(H)=H$ for all $k\in\mathbb N$.

\begin{remark}\label{C2}
	Let $G$ be a Zariski-connected complex algebraic group defined over $\mathbb R$. Then the following are equivalent:
	(i) the map $P_2:S(G(\mathbb R))\to S(G(\mathbb R))$ is surjective,
	(ii) the image of the map $P_2:G(\mathbb R)\to G(\mathbb R)$ is dense,
	(iii) $G(\mathbb R)$ is weakly exponential.
	This can be seen from \cite[Theorem 1.6]{Ch1} and the fact that $G(\mathbb R)/G(\mathbb R)^*$ is a
	group of order $2^m$ for some $m$. It can also be deduced from Theorem \ref{T1}. Indeed, by  \cite[Theorem 5.5]{Ch1}
	and Lemma \ref{L1}, 
	statement (i) implies $2$ is co-prime to the order of the group $G(\mathbb R)/G(\mathbb R)^*$ and $P_2(G(\mathbb R)^*)$ is dense.
	Thus by Theorem \ref{T1}, $P_2(G(\mathbb R))$ is dense. Now (ii) $\Leftrightarrow$ (iii) follows from
	Remark \ref{C1}.
\end{remark}
Note that if we replace $G(\mathbb R)$ by $G(\mathbb R)^*$ in statements (i)-(iii) of Remark \ref{C2}, then the equivalence follows from \cite[Theorem 1.6]{Ch1}.
Also, Remark \ref{C2} generalizes the result for a connected linear group (see \cite[Corollary 1.5]{BM}). Note that
\cite[Corollary 1.5]{BM} (see Proposition \ref{P} below) can be proved using results different from theorems in \cite{BM}.
For example, we now provide a proof which is due to P. Chatterjee.

\begin{proposition}\label{P}
	Let $G$ be a connected linear Lie group. Then $G$ is weakly exponential if and only if $P_2(G)$ is dense.
\end{proposition}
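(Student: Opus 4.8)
The forward implication is immediate: for $X\in\mathrm{Lie}(G)$ one has $\exp X=\bigl(\exp(X/2)\bigr)^2\in P_2(G)$, so $\exp(\mathrm{Lie}(G))\subseteq P_2(G)$, and the left-hand side is dense by hypothesis.

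For the converse, assume $P_2(G)$ is dense; the plan is to transfer the question to an algebraic group, where it has already been settled. Realize $G$ as a connected Lie subgroup of $\mathrm{GL}(n,\R)$ and let $\mathbf G$ be its Zariski closure --- a complex algebraic group defined over $\R$, Zariski connected since $G$ is connected, with $G\subseteq\mathbf G(\R)^*$ and $G$ Zariski dense in $\mathbf G$. Here $\mathrm{Lie}(\mathbf G(\R)^*)$ is the algebraic hull of $\mathrm{Lie}(G)$, hence contains $\mathrm{Lie}(G)$ as an ideal with abelian (indeed toral) quotient; moreover a connected semisimple Lie subgroup of $\mathrm{GL}(n,\R)$ has the form $\mathbf S(\R)^*$ for a Zariski connected semisimple $\mathbf S$, so a Levi subgroup $S$ of $G$ is common to $G$ and $\mathbf G(\R)^*$, and the two groups differ only inside the solvable radical, by the extra anisotropic torus directions. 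I would first prove a comparison lemma to the effect that these extra directions are transparent to both notions in play --- they normalize $G$, over them the power maps and the exponential map are already onto, and they do not interfere with the filling of the nilradical --- so that $P_2(G)$ is dense if and only if $P_2(\mathbf G(\R)^*)$ is dense, and $G$ is weakly exponential if and only if $\mathbf G(\R)^*$ is. Granting this, $P_2(\mathbf G(\R)^*)$ is dense, hence $\mathbf G(\R)^*$ is weakly exponential by Remark \ref{C2} (equivalently, by \cite[Theorem 1.6]{Ch1} applied to $\mathbf G(\R)^*$), and therefore so is $G$. This route uses \cite{Ch1} but not \cite{BM}; alternatively the required algebraic statement can be re-derived by running the proof of Theorem \ref{T1} with $P_k$ replaced throughout by the exponential map and with \cite{DM} in place of \cite{BM}.

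The main obstacle is exactly the comparison lemma, that is, the passage between a connected linear Lie group and the real points of an algebraic group. Linearity does not force Zariski closedness: an irrational one-parameter subgroup of a torus is dense in the torus, while the same abstract group can also occur as a closed subgroup of strictly smaller dimension in a product of a torus with a line. Thus $G$ may sit in $\mathbf G(\R)^*$ as a proper subgroup --- either dense in it or closed of smaller dimension, two situations that call for somewhat different arguments --- and one must verify in each case that the missing anisotropic torus directions change neither the density of $P_2$ nor weak exponentiality. Once that is in place the algebraic case is precisely \cite[Theorem 1.6]{Ch1}, so essentially all of the new work lies in the comparison.
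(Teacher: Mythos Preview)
Your forward implication is fine. For the converse, however, the entire weight of your argument rests on the ``comparison lemma'' --- that $P_2(G)$ is dense iff $P_2(\mathbf G(\R)^*)$ is dense, and $G$ is weakly exponential iff $\mathbf G(\R)^*$ is --- and you explicitly do not prove it. You correctly diagnose the difficulty: $G$ need not be closed in $\mathbf G(\R)^*$, and the missing anisotropic torus directions must be shown not to affect either property. This is plausible but not immediate, and without it the argument is incomplete. As written, this is a programme rather than a proof.

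The paper avoids this comparison entirely by a different reduction. Rather than embedding $G$ into the real points of an algebraic group, it passes to the \emph{quotient} $G/\mathrm{Rad}(G)$. Since $G$ is linear, so is $G/\mathrm{Rad}(G)$ (by \cite[Proposition 5.2]{OV}); being connected, linear, and semisimple, this quotient is \emph{exactly} $A(\R)^*$ for some Zariski connected semisimple algebraic group $A$ defined over $\R$ --- no comparison lemma is needed, the identification is on the nose. Density of $P_2$ descends to the quotient, so $P_2(A(\R)^*)$ is dense, and \cite[Theorem 1.6]{Ch1} (via Theorem \ref{T1} or directly) gives that $A(\R)^*$ is weakly exponential. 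The radical $\mathrm{Rad}(G)$ is connected solvable, hence weakly exponential for free, and \cite[Lemma 3.5]{HM} assembles the extension. The point is that linearity gives algebraicity \emph{only} at the semisimple level, so one should separate off the radical first rather than try to algebraize all of $G$ at once.
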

\begin{proof}
	We assume that $P_2 (G)$ is dense in $G$.
	Since $G$ is linear so is $G/{\rm Rad}(G)$ (see p. 26.
	\cite[Proposition 5.2]{OV}). Now as
	$G/{\rm Rad}(G)$ is a connected linear semisimple Lie group, it is isomorphic to
	$A(\mathbb R)^*$ for some Zarsiki connected (semisimple)
	algebraic group $A$ defined over $\mathbb R$.
	Since $P_2(G)$ is dense in $G$, it follows that $P_2 (A (\mathbb R)^*)$ is dense in $A (\mathbb R)^*$. Using 
	Theorem \ref{T1}, we see that $P_2:S(A(\mathbb R)^*) \to S(A(\mathbb R)^*)$ is surjective.
	Now apply \cite[Theorem1.6]{Ch1} to
	see that $A(\mathbb R)^*$ is weakly exponential. Thus $G/{\rm Rad}(G)$ is weakly exponential. As ${\rm Rad}(G)$ is
	connected solvable, 
	it is weakly exponential and hence by \cite[Lemma 3.5]{HM} $G$ is weakly exponential. The other part is obvious.
\end{proof}

The following corollary establishes a special behaviour for a dense image of the power map on $G(\mathbb R)$,
analogous to a result of \cite[Corollary 2.1A]{HM} for the exponential map. Moreover, 
it generalizes the result \cite[Corollary 2.1A]{HM} and \cite[Proposition 3.3]{BM}
restricted to the group $G(\mathbb R)$. 

\begin{corollary}\label{T3}
	Let $G$ be a Zariski-connected complex algebraic group defined over $\mathbb R$. Let $N$ be a Zariski-connected, Zariski-closed
	algebraic normal subgroup of $G$ defined over $\mathbb R$. If both $P_k(G(\mathbb R)/N(\mathbb R))$ and $P_k(N(\mathbb R))$ are dense,
	then $P_k(G(\mathbb R))$ is dense.
\end{corollary}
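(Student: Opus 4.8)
The plan is to deduce everything from Theorem~\ref{T1} together with the connected-group results of \cite{BM} quoted above. Write $\pi\colon G(\R)\to Q:=G(\R)/N(\R)$ for the quotient homomorphism. Since $G$ and $N$ are algebraic, $G(\R),N(\R),Q$ are Lie groups with finitely many connected components and $\pi$ is an open map; in particular $\pi(G(\R)^*)$ is an open connected subgroup of $Q$, hence $\pi(G(\R)^*)=Q^*$. Applying Theorem~\ref{T1} with $A=G(\R)$ (only the implication $(2)\Rightarrow(1)$), it is enough to prove (a) $\bigl(k,\circ(G(\R)/G(\R)^*)\bigr)=1$ and (b) $P_k(G(\R)^*)$ is dense in $G(\R)^*$. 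I shall repeatedly use the elementary fact that if $P_k(H)$ is dense in a Lie group $H$ with finitely many components, then $k$ is coprime to $\circ(H/H^*)$ (project onto the finite group $H/H^*$, where the $k$-th power map becomes surjective hence bijective) and consequently $P_k(H^*)$ is dense in $H^*$ (if $q=\lim q_n^{\,k}$ with $q\in H^*$, then $q_n^{\,k}\in H^*$ eventually, and bijectivity of $P_k$ on $H/H^*$ forces $q_n\in H^*$).

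For (a) I would put $N_1:=N(\R)\cap G(\R)^*$; this is a normal subgroup of $G(\R)^*$ whose identity component is $N(\R)^*$, so $N(\R)/N_1$ is a quotient of $N(\R)/N(\R)^*$. Since $\pi(G(\R)^*)=Q^*$, the map $\pi$ induces a short exact sequence of finite groups
\[
1\longrightarrow N(\R)/N_1\longrightarrow G(\R)/G(\R)^*\longrightarrow Q/Q^*\longrightarrow 1,
\]
so $\circ(G(\R)/G(\R)^*)=\circ(N(\R)/N_1)\cdot\circ(Q/Q^*)$; by the elementary fact (applied to $N(\R)$ and to $Q$) both factors are coprime to $k$, which gives (a).

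For (b) I would argue in two moves. First, $\pi$ restricts to an isomorphism $G(\R)^*/N_1\xrightarrow{\ \sim\ }Q^*$, and $\overline{N_1}:=N_1/N(\R)^*$ is a finite subgroup of the \emph{connected} group $\overline{G}:=G(\R)^*/N(\R)^*$, hence central, of order coprime to $k$, with $\overline{G}/\overline{N_1}\cong Q^*$; since $P_k(Q)$ is dense, $P_k(Q^*)$ is dense, so $P_k(\overline{G}/\overline{N_1})$ is dense. To conclude $P_k(\overline{G})$ is dense I would invoke \cite[Theorem~1.1]{BM}: it suffices to show $\mathrm{Reg}(\overline{G})\subset P_k(\overline{G})$. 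Given $g\in\mathrm{Reg}(\overline{G})$, its image $\bar g$ in $\overline{G}/\overline{N_1}$ is again regular (the canonical identification of Lie algebras intertwines $\mathrm{Ad}_g$ and $\mathrm{Ad}_{\bar g}$), so $\bar g=\bar h^{\,k}$ for some $\bar h$; lifting $\bar h$ to $h\in\overline{G}$ gives $h^k=gz$ with $z\in\overline{N_1}$, and choosing $z'\in\overline{N_1}$ with $z'^{\,k}=z^{-1}$ (possible as $(k,\circ(\overline{N_1}))=1$) we get $g=(hz')^k$ because $z'$ is central. Hence $P_k(\overline{G})$ is dense. Second, $N(\R)^*$ is a connected closed normal subgroup of the connected Lie group $G(\R)^*$, with $P_k(N(\R)^*)$ dense (from $P_k(N(\R))$ dense and the elementary fact) and $P_k(G(\R)^*/N(\R)^*)=P_k(\overline{G})$ dense; so \cite[Proposition~3.3]{BM} yields that $P_k(G(\R)^*)$ is dense, proving (b) and with it the corollary.

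I expect the only real obstacle to be the first move of step (b): transferring density of $P_k$ between the two connected groups $G(\R)^*/N(\R)^*$ and $Q^*=(G/N)(\R)^*$, which are genuinely different and differ by the finite central subgroup $\overline{N_1}$. Here the connected-group statement \cite[Proposition~3.3]{BM} does not apply directly, and one is forced to use the intrinsic characterization of density via regular elements (\cite[Theorem~1.1]{BM}) together with the coprimality of $k$ and $\circ(\overline{N_1})$ to push $k$-th roots through the finite central extension. The rest — the reduction via Theorem~\ref{T1}, the bookkeeping with identity components in (a), and the final application of \cite[Proposition~3.3]{BM} — is routine.
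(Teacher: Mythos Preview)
Your argument is correct and takes a genuinely different route from the paper's own proof. The paper proceeds by splitting on the parity of $k$. For odd $k$ it observes that Zariski connectedness of $G$ forces $G(\R)/G(\R)^*$ to be a $2$-group, so the coprimality condition in Theorem~\ref{T1} is automatic, and then (implicitly) that $P_k$ is always dense on $G(\R)^*$ for odd $k$; the hypotheses on $N$ and on the quotient are not really used. For even $k$ the paper reduces to $k=2$, invokes Remark~\ref{C2} to convert ``$P_2$ dense'' into ``weakly exponential'' on $G(\R)/N(\R)$ and on $N(\R)$, deduces that $G(\R)$ is connected, and then quotes the classical extension result \cite[Corollary~2.1A]{HM} for weak exponentiality to conclude.

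Your proof is uniform in $k$: you reduce via Theorem~\ref{T1} to the pair of conditions (a), (b), dispatch (a) by the short exact sequence of component groups, and handle (b) in two moves --- first bridging the finite central extension $G(\R)^*/N(\R)^*\twoheadrightarrow Q^*$ via the regular-element criterion \cite[Theorem~1.1]{BM} together with $(k,\lvert\overline{N_1}\rvert)=1$, and then applying \cite[Proposition~3.3]{BM} to the connected extension $N(\R)^*\hookrightarrow G(\R)^*\twoheadrightarrow G(\R)^*/N(\R)^*$. This avoids both the parity split and the detour through weak exponentiality; moreover, nowhere do you use that $G(\R)/G(\R)^*$ is a $2$-group, so your argument would go through without the Zariski-connectedness hypotheses on $G$ and $N$. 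The paper's approach is shorter in each case once one accepts the background facts, but it leans on the structural $2$-group fact and on the weak-exponentiality machinery, which makes it less portable.
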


\begin{proof}
As $G$ is Zariski connected for any odd $k\in\mathbb N$, 
$P_k(G(\mathbb R)/N(\mathbb R))$, $P_k(N(\mathbb R))$ and 
$P_k(G(\mathbb R))$ are dense by Theorem \ref{T1}.

Suppose both the images of $P_2:G(\mathbb R)/N(\mathbb R)\to G(\mathbb R)/N(\mathbb R)$ 
and $P_2:N(\mathbb R)\to N(\mathbb R)$ are dense. Then by Remark \ref{C2}, $G(\mathbb R)/N(\mathbb R)$ and
$N(\mathbb R)$ are weakly exponential and hence connected.
Recall that for any closed subgroup $L$ of a topological group $H$, $H$ is connected if $H/L$ and $L$ are connected.
This implies $G(\mathbb R)=G(\mathbb R)^*$.
Since both the groups $G(\mathbb R)^*/N(\mathbb R)^*$
and $N(\mathbb R)^*$ are weakly exponential, by \cite[Corollary 2.1A]{HM}, we have $G(\mathbb R)^*$ is weakly exponential.
Therefore $P_2(G(\mathbb R))$ is dense
by Remark \ref{C2}.
\end{proof}
\begin{remark}\label{Regular dense}
	Suppose $G$ is a Zariski connected complex algebraic group defined over $\mathbb R$ and $k\in\mathbb N$. Then 
	$P_k$ is surjective on $G(\mathbb R)/G(\mathbb R)^*$ and
	$P_k({\rm Reg}(G(\mathbb R)^*))\supset{\rm Reg}(G(\mathbb R)^*)$ together imply
	$P_k({\rm Reg}(G(\mathbb R)))\supset {\rm Reg}(G(\mathbb R))$.
\end{remark}

From Remark \ref{Regular dense} we have the following.

\begin{remark}\label{Dani}
	For a Zariski connected complex algebraic group $G$ defined over $\mathbb R$, the density of image $P_k(G(\mathbb R))$ is equivalent
	to the fact that the complement of the image is of measure zero. This follows since the density of image $P_k(G(\mathbb R))$ implies $\rm{Reg} (G)\cap G(\mathbb R)\subset P_k(G(\mathbb R))$ ($\rm{Reg}(G)$ is non-empty Zariski-open in $G$).
\end{remark}

\section{Surjectivity of the power maps}

\noindent{\it Proof of Corollary~\ref{C3}}:
By \cite[Lemma 5.6]{Ch1}, $P_k:G(\mathbb R)\to\ G(\mathbb R)$ is surjective if and only if for every unipotent element
$u\in G(\mathbb R)^*$, the map
$P_k:S(Z_{G(\mathbb R)}(u))\to\ S(Z_{G(\mathbb R)}(u))$ is surjective.
Note that $Z_{G(\mathbb R)}(u)=Z_G(u)(\mathbb R)$. For each unipotent element $u\in G(\mathbb R)^*$, let
$H_u=Z_G(u)$. Since $P_k$ is surjective on $S(H_u(\mathbb R))$,
by \cite[Theorem 5.5]{Ch1}, $P_k:H_u(\mathbb R)/H_u(\mathbb R)^*\to H_u(\mathbb R)/H_u(\mathbb R)^*$ 
and $P_k:S(H_u(\mathbb R)^*)\to S(H_u(\mathbb R)^*)$ are surjective. By Lemma \ref {L1},  
$P_k(H_u(\mathbb R)^*)$ is dense in
$H_u(\mathbb R)^*$, which implies $P_k(H_u(\mathbb R))$ is dense in
$H_u(\mathbb R)$ by Theorem \ref{T1}. This completes the proof.
\qed\\

The following result is well known (see \cite[Theorem 2.2]{DT}) and was later deduced by P. Chatterjee (see \cite[Corollary 5.7]{Ch1}). Here we prove it as an application of Corollary \ref{C3}.

\begin{corollary}\label{C14}
	Let $G$ be a connected real algebraic group. Then $G(\mathbb R)^*$ is exponential if and only if 
	$Z_{G(\mathbb R)^*}(u)$ is weakly exponential for
	all unipotent elements $u\in G(\mathbb R)^*$.
\end{corollary}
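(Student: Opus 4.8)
The plan is to deduce Corollary \ref{C14} from Corollary \ref{C3} together with the known criterion relating exponentiality and divisibility. First I would recall the theorem of Hofmann and Lawson (and independently McCrudden) mentioned in the introduction: for a connected Lie group $H$, the exponential map is surjective if and only if $P_k(H)=H$ for all $k\in\mathbb N$. Applying this to $H=G(\mathbb R)^*$, the statement ``$G(\mathbb R)^*$ is exponential'' becomes equivalent to ``$P_k:G(\mathbb R)^*\to G(\mathbb R)^*$ is surjective for all $k\in\mathbb N$.'' So the task reduces to showing that this family of surjectivity statements is equivalent to weak exponentiality of $Z_{G(\mathbb R)^*}(u)$ for every unipotent $u\in G(\mathbb R)^*$.

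Next I would invoke Corollary \ref{C3} applied to the connected group $G^0=G$ (so $G(\mathbb R)=G(\mathbb R)^*$ in the notation, since $G$ is Zariski connected — though one should be slightly careful, as $G(\mathbb R)$ may be disconnected even when $G$ is Zariski connected; here the hypothesis is about $G(\mathbb R)^*$, so I would either apply Corollary \ref{C3} directly and then use the last clause of Theorem \ref{T1}, or argue that for the purposes of this statement one may restrict attention to the identity component). Corollary \ref{C3} says $P_k:G(\mathbb R)\to G(\mathbb R)$ is surjective iff $P_k(Z_{G(\mathbb R)}(u))$ is dense in $Z_{G(\mathbb R)}(u)$ for all unipotent $u$. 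The centralizer $Z_G(u)$ is again an algebraic group defined over $\mathbb R$ with $Z_{G(\mathbb R)}(u)=Z_G(u)(\mathbb R)$, and Remark \ref{C1} gives that $P_k(Z_{G(\mathbb R)^*}(u))$ is dense for \emph{all} $k$ if and only if $Z_{G(\mathbb R)^*}(u)$ is weakly exponential. Quantifying over all $k$ on both sides of Corollary \ref{C3} then yields: $P_k(G(\mathbb R)^*)=G(\mathbb R)^*$ for all $k$ iff $Z_{G(\mathbb R)^*}(u)$ is weakly exponential for every unipotent $u$. Combined with the Hofmann--Lawson criterion this is exactly the claim.

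The main obstacle I anticipate is bookkeeping the distinction between $G(\mathbb R)$ and $G(\mathbb R)^*$ and between unipotent elements of $G(\mathbb R)$ versus those of $G(\mathbb R)^*$: Corollary \ref{C3} is phrased for the possibly-disconnected group $G(\mathbb R)$, whereas Corollary \ref{C14} concerns the identity component. One clean way around this is to note that every unipotent element of a real algebraic group lies in the identity component (a unipotent one-parameter subgroup connects it to the identity), so ``unipotent $u\in G(\mathbb R)$'' and ``unipotent $u\in G(\mathbb R)^*$'' describe the same set, and the centralizers appearing in Corollary \ref{C3} and in Corollary \ref{C14} differ only by passage to a fixed subgroup of finite index between $Z_{G(\mathbb R)}(u)$ and $Z_{G(\mathbb R)}(u)^*=Z_{G(\mathbb R)^*}(u)$; one must check that density of $P_k$ descends appropriately, which follows from Theorem \ref{T1} (the density on $A$ forces $(k,\circ(A/G(\mathbb R)^*))=1$ and density on the identity component, and conversely). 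A second, more delicate point is ensuring the quantifier exchange is legitimate, i.e.\ that ``for all $k$, [$P_k$ surjective $\Leftrightarrow$ $P_k$ dense on all centralizers]'' correctly rearranges to ``[for all $k$, $P_k$ surjective] $\Leftrightarrow$ [for all $k$, all centralizers weakly exponential-witnessed]''; this is formal once the per-$k$ equivalence of Corollary \ref{C3} is in hand. With those caveats addressed the proof is short, so I would present it as a two-paragraph argument citing Corollary \ref{C3}, Remark \ref{C1}, and the Hofmann--Lawson/McCrudden theorem.
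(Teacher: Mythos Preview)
Your proposal is correct and follows essentially the same route as the paper's own proof: McCrudden's criterion, then Corollary \ref{C3} (applied with $G(\mathbb R)^*$ in the role of $G(\mathbb R)$), then Remark \ref{C1}. One small inaccuracy worth fixing: the equality $Z_{G(\mathbb R)}(u)^*=Z_{G(\mathbb R)^*}(u)$ you assert need not hold in general (the latter is $Z_{G(\mathbb R)}(u)\cap G(\mathbb R)^*$, which contains but may be strictly larger than the identity component), though this does not harm the argument since Theorem \ref{T1} and Remark \ref{C1} apply to any intermediate subgroup $A$.
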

\begin{proof}
	By McCrudden's criterion, $G(\mathbb R)^*$ is exponential if and only if $P_k:G(\mathbb R)^*\to G(\mathbb R)^*$ is surjective
	for all $k\in\mathbb N$. Further, by Corollary \ref{C3}, 
	$P_k:G(\mathbb R)^*\to\ G(\mathbb R)^*$ is surjective if and only if
	$P_k(Z_{G(\mathbb R)^*}(u))$ is dense in $Z_{G(\mathbb R)^*}(u)$ for all unipotent elements $u\in G(\mathbb R)^*$.
	Now by Remark \ref{C1}, we conclude that $Z_{G(\mathbb R)^*}(u)$ is weakly exponential.
\end{proof}

{\bf Acknowledgement.} 
I would like to thank P. Chatterjee for raising the question about dense image of the power map 
for a disconnected group, especially the statement of Theorem \ref{T1}. 
He has also provided various comments and 
suggestions for other parts of the paper, and communicated the proof of Proposition \ref{B}.
I thank U. Hartl for pointing out the reference \cite{HP}.
 I would also like to thank S. G. Dani and Riddhi Shah for many comments on the manuscript. I sincerely appreciate the anonymous referee for useful comments and suggestions.
I would like to thank Indian Statistical Institute Delhi, India for a post doctoral fellowship while most of this work was done.

\vskip2mm

\begin{flushleft}
	\begin{flushleft}
		Arunava Mandal \\
		Theoretical Statistics and Mathematics Unit,\\
		Indian Statistical Institute, Bangalore Centre,\\
		Bengaluru 560059, India.

		%\medskip
		E-mail: {\tt a.arunavamandal@gmail.com} 
	\end{flushleft}
\end{flushleft}

\end{document}